\newtheorem{theorem}{Theorem}[section]
\newtheorem{lemma}[theorem]{Lemma}
\newtheorem{corollary}[theorem]{Corollary}
\newtheorem{proposition}[theorem]{Proposition}
\newtheorem{claim}[theorem]{Claim}
\newtheorem{fact}[theorem]{Fact}
\newtheorem{problem}[theorem]{Problem}
\newcounter{maintheorem}
\newtheorem{mainth}[maintheorem]{Theorem}
\theoremstyle{remark}
\newtheorem{remark}[theorem]{Remark}
\theoremstyle{definition}
\newtheorem{definition}[theorem]{Definition}
\newtheorem{example}[theorem]{Example}
\numberwithin{equation}{section}
\newcommand{\N}{\mathbb{N}}
\newcommand{\e}{\varepsilon}
\newcommand{\p}{\varphi}
\newcommand{\nn}[1]{{\left\vert\kern-0.25ex\left\vert\kern-0.25ex\left\vert #1 
\right\vert\kern-0.25ex\right\vert\kern-0.25ex\right\vert}}
\newcommand{\cut}{\mathord{\upharpoonright}}
\renewcommand{\leq}{\leqslant}
\renewcommand{\geq}{\geqslant}
\DeclareMathOperator{\supp}{supp}
\DeclareMathOperator{\dens}{dens}
\DeclareMathOperator{\cf}{cf}
\DeclareMathOperator{\ims}{ims}
\DeclareMathOperator{\Lev}{Lev}
\DeclareMathOperator{\htte}{ht}
\DeclareMathOperator{\rk}{rk}
\newcommand{\Span}{\operatorname{span}}
\newcounter{smallromans}
\newenvironment{romanenumerate}
{\begin{list}{{\normalfont\textrm{(\roman{smallromans})}}}%
  {\usecounter{smallromans}\setlength{\itemindent}{0cm}%
   \setlength{\leftmargin}{5.5ex}\setlength{\labelwidth}{5.5ex}%
   \setlength{\topsep}{.5ex}\setlength{\partopsep}{.5ex}%
   \setlength{\itemsep}{0.1ex}}}%
{\end{list}}
\newcommand{\bone}{\mathbbm{1}}
\renewcommand\qedsymbol{$\blacksquare$} 
\begin{document}
\title[$C(K)$ spaces without norming M-bases]{Banach spaces of continuous functions\\without norming Markushevich bases}

\author[T.~Russo]{Tommaso Russo}
\address[T.~Russo]{Universit\"{a}t Innsbruck, Department of Mathematics, Technikerstra\ss e 13, 6020 Innsbruck, Austria; and Department of Mathematics, Faculty of Electrical Engineering, Czech Technical University in Prague, Technick\'a 2, 166 27 Prague 6, Czech Republic \newline
\href{https://orcid.org/0000-0003-3940-2771}{ORCID: \texttt{0000-0003-3940-2771}}}
\email{tommaso.russo@uibk.ac.at, tommaso.russo.math@gmail.com}

\author[J.~Somaglia]{Jacopo Somaglia}
\address[J.~Somaglia]{Politecnico di Milano, Dipartimento di Matematica, Piazza Leonardo da Vinci 32, 20133 Milano, Italy \newline
\href{https://orcid.org/0000-0003-0320-3025}{ORCID: \texttt{0000-0003-0320-3025}}}
\email{jacopo.somaglia@polimi.it}

\thanks{The research of the authors has been partially supported by GNAMPA (INdAM -- Istituto Nazionale di Alta Matematica).}

\keywords{Norming Markushevich basis, scattered space, Eberlein compact space, Continuous functions on ordinals, P-point, tree}
\subjclass[2020]{46B26, 46B20 (primary), and 54G12, 54D30 (secondary)}
\date{\today}

\begin{abstract} We investigate the question whether a scattered compact topological space $K$ such that $C(K)$ has a norming Markushevich basis (M-basis, for short) must be Eberlein. This question originates from the recent solution, due to H\'ajek, Todor\v{c}evi\'c, and the authors, to an open problem from the Nineties, due to Godefroy. Our prime tool consists in proving that $C([0,\omega_1])$ does not embed in a Banach space with a norming M-basis, thereby generalising a result due to Alexandrov and Plichko. Subsequently, we give sufficient conditions on a compact $K$ for $C(K)$ not to embed in a Banach space with a norming M-basis. Examples of such conditions are that $K$ is a $0$-dimensional compact space with a P-point, or a compact tree of height at least $\omega_1 +1$. In particular, this allows us to answer the said question in the case when $K$ is a tree and to obtain a rather general result for Valdivia compacta. Finally, we give some structural results for scattered compact trees; in particular, we prove that scattered trees of height less than $\omega_2$ are Valdivia.
\end{abstract}
\maketitle

\section{Introduction}
A family of vectors $\{e_\alpha; \p_\alpha\}_{\alpha\in \Gamma}\subseteq X \times X^*$ is a \emph{Markushevich basis} (M-basis, for short) for a Banach space $X$ if $\langle \p_\alpha, e_\beta \rangle= \delta_{\alpha,\beta}$, $\Span\{e_\alpha\}_{\alpha\in \Gamma}$ is dense in $X$, and $\Span\{\p_\alpha\}_{\alpha\in \Gamma}$ is $w^*$-dense in $X^*$. One of the main research areas in non-separable Banach spaces, very active since the late Sixties, consists in detecting classes of non-separable Banach spaces that can be characterised by the existence of M-bases with suitable additional properties. This for example led to the theory of WLD and Plichko spaces, \cite[Chapter 6]{HMVZ}. Arguably, the strongest such a property is that of \emph{shrinking} M-basis, obtained by requiring $\Span\{\p_\alpha\}_{\alpha\in \Gamma}$ to be (norm) dense in $X^*$. A result that will be important for us is that a Banach space admits a shrinking M-basis if and only if it is simultaneously Asplund and WCG, \cite[Theorem 6.3]{HMVZ}.

A class of M-bases that is well studied since the Seventies \cite{AP, CastilloSalguero, Godefroy, Hajek, HRST, JohnZizler} is that of norming M-bases. An M-basis $\{e_\alpha; \p_\alpha\}_{\alpha\in\Gamma}$ is \emph{norming} if, for some $\lambda \in (0,1]$,
$$\lambda\|x\|\leq \sup\big\{|\langle\p,x\rangle|\colon \p\in {\rm span}\{\p_\alpha\}_{\alpha\in\Gamma},\, \|\p\|\leq 1 \big\} \qquad(x\in X),$$
namely, if ${\rm span}\{\p_\alpha\} _{\alpha\in\Gamma}$ is a norming subspace for $X$. Results such as the aforementioned characterisation of Banach spaces with a shrinking M-basis, constructions involving Jayne--Rogers selectors \cite{Fabian dual LUR, FG}, and others seemed to indicate a close connection between norming M-bases and WCG Banach spaces. This was crystallised in two problems, the first being whether every WCG Banach space has a norming M-basis (John and Zizler, \cite{JohnZizler}) and the second whether Asplund spaces with norming M-bases must be WCG (Godefroy, \cite{AP}, \cite[p.~211]{HMVZ}, \cite[Problem~112]{GMZ}). Nevertheless, both problems have been recently solved in the negative \cite{Hajek, HRST} (we also refer to the same papers for a more detailed history and motivation for the problems).

Our motivation in this paper comes from the construction in \cite{HRST}, where the counterexample is a subspace of $C(\mathcal{K}_\varrho)$, for a certain compact space $\mathcal{K}_\varrho$, which is scattered and not Eberlein (but it is not known if $C(\mathcal{K}_\varrho)$ admits a norming M-basis). This naturally suggests considering Godefroy's problem in the subclass of $C(K)$ spaces. Via well-known characterisations of WCG or Asplund $C(K)$ spaces (that we will recall in Section \ref{sec: prelim}), the problem can be restated as follows.
\begin{problem}[Godefroy]\label{pb: Gilles} Let $K$ be a scattered compact space such that $C(K)$ has a norming M-basis. Must $K$ be Eberlein?
\end{problem}

Even though we didn't manage to solve the problem in its full generality, we do obtain a rather general result for Valdivia compacta (Remark \ref{rmk: Kro and around}) and a complete solution for compact trees (Theorem \ref{th: sol for trees}). We should also mention that the problem was solved for adequate compacta in \cite[\S 5]{HRST}. When investigating the problem, one is naturally led to consider the Banach space $C([0,\omega_1])$. Indeed, in the simpler case when $C(K)$ has a $1$-norming M-basis, then $K$ is Valdivia, \cite[Theorem 5.3]{Kalenda survey} (here we are using that $K$ is scattered). Therefore, if $K$ is not Eberlein, it must contain a copy of $[0,\omega_1]$, \cite{Alster, DG}. For such a space, an important result due to Alexandrov and Plichko \cite{AP} is that $C([0,\omega_1])$ does not have a norming M-basis. The first main ingredient in our paper is the following more general statement.

\begin{mainth}\label{mth: AP} $C([0,\omega_1])$ embeds in no Banach space with a norming M-basis.
\end{mainth}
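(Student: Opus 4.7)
The plan is to adapt Alexandrov--Plichko's argument, which addresses the case $Y=X$ with $Y:=C([0,\omega_1])$, to the embedding setting. Suppose for contradiction that $Y$ embeds isomorphically as a closed subspace of a Banach space $X$ admitting a $\lambda$-norming M-basis $\{e_\alpha;\varphi_\alpha\}_{\alpha\in\Gamma}$; after rescaling we may assume the embedding is isometric and identify $Y$ with its image.

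The first step is to exploit the scatteredness of $[0,\omega_1]$: every regular Borel measure on it is purely atomic, so $Y^*\cong\ell_1([0,\omega_1])$ isometrically. Writing $\mu_\alpha:=\varphi_\alpha|_Y=\sum_{\beta\in S_\alpha}c^\alpha_\beta\delta_\beta$ with $S_\alpha\subseteq[0,\omega_1]$ countable, and noting that restriction is $1$-Lipschitz, the span $\operatorname{span}\{\mu_\alpha\}$ is a $\lambda$-norming subspace of $Y^*$. For each successor ordinal $\beta<\omega_1$, the clopen-singleton indicator $\chi_{\{\beta\}}\in Y$ has unit norm, so $\lambda$-norming furnishes $\psi_\beta=\sum_{\alpha\in F(\beta)}a^\beta_\alpha\varphi_\alpha$ with $F(\beta)\subseteq\Gamma$ finite, $\|\psi_\beta\|_{X^*}\leq 1$, and $|\mu_\beta(\{\beta\})|\geq\lambda/2$, where $\mu_\beta:=\psi_\beta|_Y$. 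Applying the $\Delta$-system lemma to the uncountable family $\{F(\beta)\}_{\beta<\omega_1}$ yields an uncountable $I\subseteq\omega_1$ with common finite root $F_0\subseteq\Gamma$; setting $\tau_0:=\sup\bigl(\bigcup_{\alpha\in F_0}S_\alpha\cap[0,\omega_1)\bigr)<\omega_1$ and passing to $\beta\in I\cap(\tau_0,\omega_1)$, the root contributes nothing to $\mu_\beta(\{\beta\})$, forcing the pairwise disjoint leaves $F(\beta)\setminus F_0$ to carry the norming mass.

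The hard part will be promoting this almost-disjoint configuration into a quantitative contradiction. Since M-bases need not be unconditional, $\|\psi_\beta\|_{X^*}\leq 1$ gives no control on $\sum_\alpha|a^\beta_\alpha|$, so the contradiction cannot rely on coefficient sums alone; it must come from combining (i) the biorthogonality of $\{e_\alpha;\varphi_\alpha\}$ inside $X$, (ii) the countable-support constraint on the $\mu_\alpha$'s forced by scatteredness in $Y$, and (iii) a pressing-down (Fodor) argument on a stationary refinement of $I$ to pin the leaf supports to a single countable ``target'' in $[0,\omega_1]$. The ultimate aim is to exhibit either a norm-bounded element of $X$ whose M-basis expansion conflicts with a single-point normer in $Y$, or a continuous test function in $Y$ on which the $\lambda$-norming inequality fails. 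This last step is where Alexandrov--Plichko's original reasoning --- which operates entirely inside $Y$ --- requires genuine adaptation to the ambient space $X$, and is the principal technical obstacle.
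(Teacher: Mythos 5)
There is a genuine gap, and you in fact flag it yourself: the entire quantitative contradiction is missing. Your first step (restricting the functionals to $Y$ and using scatteredness to write each $\mu_\alpha$ as a countably supported atomic measure) is sound and matches the paper's setup. But the combinatorial configuration you then build --- single-point normers $\psi_\beta$ for the indicators $\chi_{\{\beta\}}$, refined by a $\Delta$-system to have pairwise disjoint leaves carrying the norming mass --- cannot by itself yield a contradiction. The same configuration occurs in $c_0([0,\omega_1))$, which certainly embeds in a space with a norming (even shrinking) M-basis: there the unit vectors are normed by disjointly supported coordinate functionals. So almost-disjointness of the normers of the point masses is not the obstruction; what distinguishes $C([0,\omega_1])$ is the behaviour of the functions $\bone_{[0,\gamma]}$ and, more generally, of functions that are locally constant with small increments across long stretches of ordinals. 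Neither Fodor's lemma nor biorthogonality, as invoked in your closing paragraph, is pointed at that feature.

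The missing ideas, concretely, are three. First, since $\bone_{[0,\gamma]}$ is continuous and the M-basis coordinates of any fixed vector lie in $c_0(\Gamma)$, all but countably many $\mu_\alpha$ must have \emph{zero total mass}; similarly, for each fixed $\gamma$ only countably many $\mu_\alpha$ charge $[0,\gamma]$. Second, a closing-off argument then produces points $a_1<a_2<\cdots<\omega_1$ such that every $\mu_\alpha$ is supported either inside a single interval $I_k=[a_k,a_{k+1})$ or outside all of them, and has zero mass on its interval. Third --- and this is the heart of the proof --- one needs an explicit unit-norm test function: the staircase $z_N=\sum_{k=1}^{N}\frac{k}{N}\bone_{(a_k,a_{k+1}]}+\sum_{k=N+1}^{2N-1}\frac{2N-k}{N}\bone_{(a_k,a_{k+1}]}$. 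Because $z_N$ changes by only $1/N$ between consecutive intervals and each relevant measure has zero total mass on its interval, the pairing telescopes to a single atom at the endpoint $a_k$, giving $|\langle T^*\p, z_N\rangle|\leq \frac{1}{N}\|T^*\p\|$ for every $\p$ in the span of the $\p_\alpha$'s with $\|\p\|\leq 1$; taking $N$ large contradicts $\lambda$-norming. Without this (or an equivalent) construction, your outline stops exactly where the proof has to begin.
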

The proof of Theorem \ref{mth: AP} will be given in Section \ref{sec: AP subspace}. As it turns out, the proof is essentially based on the same argument as in \cite{AP}, more precisely we follow the alternative formulation given in \cite[Theorem 5.25]{HMVZ}. However, our more general argument requires some additional care, therefore we decided to present the argument in detail for convenience of the reader. In essence, the main idea consists in building a certain `ziqqurat like' function (see the definition of the function $z_N$ in \eqref{eq: ziqqurat}) and it originates with Alexandrov's paper \cite{Alexandrov}. The same type of function is then also used in \cite{AP}, \cite[Theorem 5.25]{HMVZ}, and \cite{Kalenda omega2}.\smallskip

Subsequently, in Section \ref{sec: consequences} we spell out explicit examples of Banach spaces which Theorem \ref{mth: AP} applies to and, in particular, we focus on $C(K)$ spaces. The canonical way to find a copy of $C([0,\omega_1])$ inside $C(K)$ for some $K$ is, of course, to find a continuous, surjective map from $K$ to $[0,\omega_1]$. This simple observation then leads us to our second main result.

\begin{mainth}\label{mth: C(K)} If $K$ is a compact topological space such that $[0,\omega_1]$ is a continuous image of $K$, then $C(K)$ does not embed in a Banach space with norming M-basis.\\
In particular, this applies to:
\begin{romanenumerate}
    \item\label{mth: ordinal} the ordinal intervals $[0,\eta]$, for every $\eta\geq \omega_1$;
    \item\label{mth: p-point} $0$-dimensional compacta that admit a P-point;
    \item\label{mth: trees} trees of height at least $\omega_1+1$ (endowed with the coarse wedge topology).
\end{romanenumerate}
\end{mainth}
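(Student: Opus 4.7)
My plan is to reduce the main implication to Theorem~A via functoriality of $C(\cdot)$: any continuous surjection $\pi\colon K \twoheadrightarrow [0,\omega_1]$ yields an isometric embedding $C([0,\omega_1]) \hookrightarrow C(K)$ through $f\mapsto f\circ\pi$, so an embedding of $C(K)$ into a Banach space $X$ with norming M-basis would produce an embedding of $C([0,\omega_1])$ into $X$, contradicting Theorem~A. It then suffices to exhibit, in each of the three cases, a continuous surjection onto $[0,\omega_1]$. Case~(i) is immediate: for $\eta\geq\omega_1$, the map $[0,\eta]\to[0,\omega_1]$, $\alpha\mapsto\min\{\alpha,\omega_1\}$, is a continuous retraction.

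For case~(ii), let $p$ be a P-point of the $0$-dimensional compactum $K$, which we may assume to be non-isolated. I would construct, by transfinite recursion on $\alpha<\omega_1$, a strictly decreasing chain $(U_\alpha)_{\alpha<\omega_1}$ of clopen neighborhoods of $p$, starting from $U_0:=K$. At a successor step, $p$ is non-isolated in $U_\alpha$; picking any $q\in U_\alpha\setminus\{p\}$ and separating $p$ from $q$ by a clopen set of $K$ via $0$-dimensionality, I obtain a proper clopen subneighborhood $U_{\alpha+1}\subsetneq U_\alpha$ containing $p$. At a limit step $\lambda<\omega_1$, the set $V_\lambda:=\bigcap_{\beta<\lambda}U_\beta$ is a countable intersection of neighborhoods of $p$, hence still a neighborhood by the P-point property; any clopen $U_\lambda\subseteq V_\lambda$ with $p\in U_\lambda$ then serves, and is automatically strictly contained in every previous $U_\beta$. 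Define $f\colon K\to[0,\omega_1]$ by $f(x):=\min\{\alpha\leq\omega_1: x\notin U_\alpha\}$, with the convention $\min\emptyset=\omega_1$. Continuity follows because $f^{-1}([0,\alpha])=K\setminus U_\alpha$ is clopen and $f^{-1}([\alpha,\omega_1])=\bigcap_{\beta<\alpha}U_\beta$ is closed; witnesses $x_\alpha\in U_\alpha\setminus U_{\alpha+1}$ show every successor ordinal lies in $f(K)$, whence by compactness of $f(K)$ we get $f(K)=[1,\omega_1]$, which is homeomorphic to $[0,\omega_1]$.

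For case~(iii), let $T$ be a compact tree of height at least $\omega_1+1$ endowed with the coarse wedge topology, and fix a node $t\in T$ of height $\omega_1$. The down-set $C_t:=\{s\in T:s\leq t\}$ is a chain of order type $\omega_1+1$, and the subspace topology induced by the coarse wedge topology makes it homeomorphic to $[0,\omega_1]$. The natural map $r\colon T\to C_t$, $s\mapsto s\wedge t$ (the greatest common predecessor with $t$), is a retraction onto $C_t$; continuity of $r$ in the coarse wedge topology, which is the main technical point in this case, is verified by pulling back the basic clopen sets of $C_t$ and recognising them, via the order-theoretic structure of the tree, as finite combinations of coarse wedge basic clopen subsets of $T$. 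This yields the desired continuous surjection and completes the proof.
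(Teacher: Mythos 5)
Your proposal is correct and follows essentially the same route as the paper: the main implication via $f\mapsto f\circ\pi$ is the paper's Proposition~\ref{prop: cont image}, case (iii) is exactly Lemma~\ref{lem: omega1 image} ($s\mapsto s\wedge t$ for $t\in\Lev_{\omega_1}(T)$), and your decreasing chain of clopen neighbourhoods of the P-point is, up to taking complements, the paper's construction of an $\omega_1$-scc in Theorem~\ref{th: scc and P-point}. The only cosmetic difference is that the paper packages case (ii) through the intermediate notion of a strict $\omega_1$-chain of clopen sets, while you pass directly from the P-point to the continuous surjection.
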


As we said, the first part of the result follows from Theorem \ref{mth: AP} and standard facts, and it directly gives item (\ref{mth: ordinal}). In particular, we get that Banach spaces such as $C([0,\eta])$ for $\eta\geq \omega_1$, $C([0,\omega_1]\times [0,1])$, $C([0,\omega_1]^2)$, or $C([0,2\omega_1])$ do not admit norming M-bases. Recall that, by \cite{Semadeni}, $C([0,2\omega_1])= C([0,\omega_1]) \oplus C([0,\omega_1])$ is not isomorphic to $C([0,\omega_1])$, hence the non-existence of a norming M-basis in $C([0,2\omega_1])$ does not follow from \cite{AP}. Clause (\ref{mth: p-point}) instead is a consequence of a study of those compacta that admit continuous images onto $[0,\omega_1]$, which is the core part of Section \ref{sec: consequences} (Theorem \ref{th: scc and P-point}). Finally, (\ref{mth: trees}) is proved at the beginning of Section \ref{sec: trees}. In the same section we also give a positive answer to Problem \ref{pb: Gilles} when $K$ is a tree endowed with the coarse wedge topology (the definitions of trees and of the coarse wedge topology will be recalled at the beginning of that section). The subsequent, and main, part of Section \ref{sec: trees} is then dedicated to a study of scattered trees with the coarse wedge topology. In particular, the main result (Corollary \ref{cor: scattered tree is Valdivia}) asserts that scattered trees of height less than $\omega_2$ are Valdivia, which complements a result from \cite{S20}.

Having seen that trees of height at least $\omega_1+1$ can't admit norming M-bases, it is natural to ask what could happen in the case of height at most $\omega_1$. Under this assumption, it is known and not hard to infer that the tree $T$ must be Corson, \cite[Theorem 2.8]{N2}. Therefore, when $T$ is additionally scattered, it follows from \cite{Alster} that it must be Eberlein; hence $C(T)$ is simultaneously Asplund and WCG, which implies that it even admits a shrinking M-basis. Another case when the existence of a norming M-basis can be assured is if $T$ is metrisable, for in this case $C(T)$ is separable and the existence of a norming M-basis is then classical, \cite{Markushevich}.  However, let us mention that it seems open whether $C(T)$ admits a norming M-basis for all trees $T$ of height at most $\omega_1$ (Problem \ref{pb: norming in tree}).

\section{Preliminaries}\label{sec: prelim}
Throughout the paper, all topological spaces are assumed to be Hausdorff. For a topological space $K$, $K'$ denotes the set of accumulation points. The \emph{Cantor--Bendixson derivative} of $K$ is defined inductively as follows: $K^{(0)}\coloneqq K$, $K^{(\alpha+1)}\coloneqq K^{(\alpha)'}$, and if $\gamma$ is a limit ordinal $K^{(\gamma)}\coloneqq \bigcap_{\alpha< \gamma} K^{(\alpha)}$. A topological space $K$ is \emph{scattered} if every its closed subspace has an isolated point; in other words, $K$ contains no non-empty perfect subset. A topological space $K$ is scattered if and only if there is some ordinal $\alpha$ such that $K^{(\alpha)}= \emptyset$. In this case, such a minimal $\alpha$ is denoted by $\rk(K)$ and called the \emph{Cantor--Bendixson rank} of $K$. When $K$ is compact, a standard argument shows that $\rk(K)$ is always a successor ordinal. Every scattered compact is \emph{zero-dimensional}, \emph{i.e.}, it admits a basis for the topology consisting of clopen sets \cite[Theorem 29.7]{Willard}.

Given an arbitrary set $\Gamma$, the \emph{$\Sigma$-product} of real lines is
\begin{equation*}
\Sigma(\Gamma)=\{x\in [0,1]^{\Gamma}\colon|\{\gamma\in \Gamma\colon x(\gamma)\neq 0\}|\leq \omega\},
\end{equation*}
endowed with the restriction of the product topology from $[0,1]^{\Gamma}$. A compact space is \emph{Valdivia} if there exists a homeomorphic embedding $h\colon K \to [0,1]^{\Gamma}$ such that $h(K)\cap\Sigma(\Gamma)$ is dense in $h(K)$. In such a case, $\Sigma(K):=h^{-1}(\Sigma(\Gamma))$ is called a \emph{$\Sigma$-subset} of $K$. It is easy to verify that $\Sigma(K)$ is countably closed in $K$ and Fr\'{e}chet--Urysohn, \cite[Lemma 1.6]{Kalenda survey}. A compact space $K$ is \emph{Corson} if it is homeomorphic to a subset of $\Sigma(\Gamma)$, for some set $\Gamma$. A compact space is \emph{Eberlein} if it is homeomorphic to a subset of $c_0(\Gamma)$ endowed with the pointwise topology, for some set $\Gamma$. From these definitions it is clear that every Eberlein compact space is Corson and every Corson compactum is Valdivia. In general the converses fail to hold, however there are notable cases where the implications can be reversed. First, an important result due to Alster \cite{Alster} asserts that scattered Corson compacta are Eberlein. Moreover, Deville and Godefroy \cite{DG} proved that a Valdivia compact space is Corson if and only if it does not contain $[0,\omega_1]$.\smallskip

Next, we briefly recall the definitions of some classes of Banach spaces. A Banach space $X$ is \emph{Asplund} if every separable subspace of $X$ has a separable dual. Importantly, a $C(K)$ space is Asplund precisely when the compact space $K$ is scattered. A Banach space is \emph{weakly compactly generated} (WCG, for short) if it can be expressed as the closed linear span of some weakly compact subset. In the realm of $C(K)$ spaces, WCG spaces are characterised by $K$ being Eberlein, \cite{AmirLind}.

Finally, we recall some notions on biorthogonal systems in Banach spaces.  A \emph{biorthogonal system} in a Banach space $X$ is a system $\{e_\alpha; \p_\alpha\}_{\alpha\in\Gamma} \subseteq X \times X^*$ such that $\langle\p_\alpha, e_\beta \rangle=\delta_{\alpha,\beta}$ ($\alpha,\beta\in \Gamma$). A \emph{Markushevich basis} (henceforth, M-basis) is a biorthogonal system with the properties that $\Span\{e_\alpha\}_{\alpha\in \Gamma}$ is dense in $X$ and $\Span\{\p_\alpha\}_{\alpha\in \Gamma}$ is $w^*$-dense in $X^*$. The M-basis is shrinking if $\Span\{\p_\alpha\}_{\alpha\in \Gamma}$ is norm dense in $X^*$. It is \emph{$\lambda$-norming} ($0<\lambda \leq1$) if
$$\lambda\|x\|\leq \sup\big\{|\langle\p,x\rangle|\colon \p\in {\rm span}\{\p_\gamma\}_{\gamma\in\Gamma},\, \|\p\|\leq 1 \big\} \qquad(x\in X),$$
namely, if ${\rm span}\{\p_\gamma\} _{\gamma\in\Gamma}$ is a $\lambda$-norming subspace for $X$. $\{x_\gamma; \p_\gamma\}_{\gamma\in\Gamma}$ is \emph{norming} if it is $\lambda$-norming, for some $\lambda>0$. Among the many uses of M-bases, let us single out the following property that we shall use: every Banach space with an M-basis admits a continuous injection in $c_0(\Gamma)$. More precisely, if an M-basis satisfies $\|\p_\alpha\| =1$, then 
\begin{equation}\label{eq: coord in c0}
\left\{\langle \p_\alpha, x \rangle \colon \alpha\in \Gamma\right\}\in c_0(\Gamma).
\end{equation}
For additional information and reference on these notions, we shall refer the reader, \emph{e.g.}, to \cite{CCS, DGZ, FHHMZ, HMVZ, HRST, Kalenda survey} and the reference therein.

\section{Embeddings of \texorpdfstring{$C([0,\omega_1])$}{C([0,ω1])} and norming M-bases}\label{sec: AP subspace}
This section is dedicated to the proof of Theorem \ref{mth: AP}, whose statement is also recalled below, for convenience of the reader. Before we begin, let us explain one piece of notation that we shall use throughout the section. We identify the Banach space $C_0([0,\omega_1))$ of continuous functions that vanish at infinity on the locally compact space $[0,\omega_1)$ with the hyperplane of $C([0,\omega_1])$ given by functions that are zero at the point $\omega_1$. For a function $f\in C_0([0,\omega_1))$, we define the \emph{support} of $f$ to be $\supp(f)\coloneqq \overline{\{\gamma\in [0,\omega_1)\colon f(\gamma) \neq0\}}$. If $\mu$ is a functional on $C_0([0,\omega_1))$, we will identify it with the corresponding Radon measure $\mu\in M([0,\omega_1))$. Since $[0,\omega_1)$ is scattered, every such a measure is a weighted series of Dirac measures, \cite{Rudin}; hence $M([0,\omega_1))$ is isometric to $\ell_1([0,\omega_1))$. Therefore, we can define the \emph{support} of $\mu$ to be
\begin{equation*}
    \supp(\mu)\coloneqq \{\gamma\in [0,\omega_1)\colon \mu(\{ \gamma\}) \neq 0\}.
\end{equation*}
Let us alert the reader that this definition does not agree with the general definition of support of a measure, but it actually coincides with the support of a vector with respect to the canonical Schauder basis of $\ell_1([0,\omega_1))$.

\begin{theorem} Assume that a Banach space $X$ contains an isomorphic copy of $C([0,\omega_1])$. Then $X$ admits no norming M-basis.
\end{theorem}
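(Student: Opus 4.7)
My plan is to extend the Alexandrov--Plichko argument, as presented in \cite[Theorem~5.25]{HMVZ}, from $C([0,\omega_1])$ itself to this more general embedded setting. Assume toward contradiction that $X$ admits a $\lambda$-norming M-basis $\{e_\alpha;\p_\alpha\}_{\alpha\in\Gamma}$ with $\|\p_\alpha\|=1$, and fix an isomorphic embedding $T\colon C([0,\omega_1])\hookrightarrow X$. The first step is to transfer the norming condition to $C([0,\omega_1])$. Set $\mu_\alpha\coloneqq T^*\p_\alpha\in M([0,\omega_1])$; since $[0,\omega_1]$ is scattered, each $\mu_\alpha$ is purely atomic with countable support $A_\alpha$ and $\|\mu_\alpha\|_M\leq\|T\|$. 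Propagating the constants $\|T\|,\|T^{-1}\|$ through the norming inequality of $X$ restricted to $T(C([0,\omega_1]))$ yields a constant $\lambda'>0$ such that
\[
\lambda'\|f\|_\infty\leq\sup\bigl\{|\langle\nu,f\rangle|:\nu\in\Span\{\mu_\alpha\}_{\alpha\in\Gamma},\ \|\nu\|_M\leq 1\bigr\}
\]
for every $f\in C([0,\omega_1])$; that is, the span of $\{\mu_\alpha\}$ is a $\lambda'$-norming subspace of $M([0,\omega_1])$ for $C([0,\omega_1])$, with every element purely atomic of countable support. Switching to the hyperplane $C_0([0,\omega_1))$ along the identification recalled in the introduction to Section~\ref{sec: AP subspace}, one may additionally assume that the corresponding measures live in $\ell_1([0,\omega_1))$.

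The heart of the proof is to construct, for each $N\in\N$, a continuous `ziqqurat-like' function $z_N\colon[0,\omega_1]\to\R$ with $\|z_N\|_\infty$ of order $N$, but uniformly small pairings with the norming family: $|\langle\nu,z_N\rangle|\leq C$ for a constant $C$ independent of both $N$ and of $\nu$ in the norming unit ball. As explained in the introduction, this construction originates with \cite{Alexandrov} and is built by a recursive procedure that produces an increasing sequence of successor ordinals $(\gamma_k)_{k}<\omega_1$ (taken successors so that the individual `steps' are continuous) interleaved with the countable supports $A_\alpha$. The crucial input is that any finite combination $\nu=\sum_i c_i\mu_{\alpha_i}$ has support contained in some initial segment $[0,\delta_\nu]$ with $\delta_\nu<\omega_1$, and the placement of the $\gamma_k$'s guarantees that $\langle\nu,z_N\rangle$ collapses (via telescoping cancellations between the `layers' of $z_N$) to a quantity bounded in terms of $\|\nu\|_M$ alone.

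Combining the two ingredients gives $\lambda' N\leq C$ for every $N$, which is absurd for $N$ large, completing the proof. The main technical obstacle is the ziqqurat construction itself: while $z_N$ is morally the function used in \cite{AP} and \cite[Theorem~5.25]{HMVZ}, the present embedded setting forces us to carry the isomorphism constants $\|T\|,\|T^{-1}\|$ throughout the estimates and to verify that the pulled-back measures $\mu_\alpha$ still enjoy the properties (countable atomic supports that can be spread arbitrarily close to $\omega_1$) on which the original argument relies. This is precisely the `additional care' mentioned in the introduction, and is where most of the work in Section~\ref{sec: AP subspace} must go.
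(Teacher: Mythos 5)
Your outline follows the same route as the paper (pull the coordinate functionals back through $T^*$ to atomic measures, transfer the norming inequality, build the ziqqurat function, derive $\lambda' N\leq C$), and your transfer of the norming condition with $\lambda'=\lambda/(\|T\|\,\|T^{-1}\|)$ is correct. However, there is a genuine gap at exactly the point you defer to ``additional care'': you never identify, let alone prove, the two properties of the family $(\mu_\alpha)_{\alpha\in\Gamma}$ that make the argument go through, and these are precisely where the hypothesis that $\{e_\alpha;\p_\alpha\}$ is an M-basis of $X$ (rather than merely a norming \emph{subspace} of $X^*$) is used. First, your recursive choice of the ordinals $\gamma_k$ ``interleaved with the countable supports $A_\alpha$'' is not well defined as stated: $\Gamma$ may be uncountable, so one cannot simply pass beyond all the supports. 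One must first show that for each $\gamma<\omega_1$ only countably many $\mu_\alpha$ have support meeting $[0,\gamma]$; this follows from separability of $T(C([0,\gamma]))$ together with the density of $\Span\{e_\alpha\}_{\alpha\in\Gamma}$ in $X$ and biorthogonality, and only then does a closing-off argument produce intervals $I_k$ such that each $\supp(\mu_\alpha)$ is either contained in a single $I_k$ or disjoint from all of them.

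Second, and more seriously, ``telescoping cancellations'' alone do not bound $\langle\nu,z_N\rangle$. Writing $\nu=h+\sum_k h_k$ with $\supp(h_k)\subseteq I_k$, the pairing of $h_k$ with the two adjacent steps of $z_N$ produces $-\frac1N h_k(\{a_k\})+\frac kN h_k(I_k)$ (in the paper's normalisation), and the term $\frac kN h_k(I_k)$ is \emph{not} small unless $h_k(I_k)=0$, i.e.\ unless the measures supported in $I_k$ have zero total mass. This requires showing that only countably many $\alpha$ satisfy $\mu_\alpha([0,\omega_1))\neq 0$, which the paper obtains from the $c_0$-property of M-basis coordinates \eqref{eq: coord in c0} applied to the vectors $T\bone_{[0,\gamma]}$, and then pushing all such measures into an initial segment $[0,a_0)$ before the staircase begins. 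Your proposal never invokes biorthogonality or the $c_0$-coordinate property, so as written the key estimate $|\langle\nu,z_N\rangle|\lesssim\|\nu\|_M/N$ does not follow. (A minor further point: there is no need to take the $\gamma_k$ to be successor ordinals; intervals of the form $(a_k,a_{k+1}]$ are clopen in $[0,\omega_1)$ for arbitrary $a_k$, and in fact the closing-off produces limit ordinals.)
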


\begin{proof} Towards a contradiction, let us assume that $X$ admits a norming M-basis and let us fix one, say $\{e_\alpha; \p_\alpha\}_{\alpha\in \Gamma}$. Fix a scalar $\lambda>0$ such that $\{e_\alpha; \p_\alpha\} _{\alpha\in \Gamma}$ is $\lambda$-norming.  Without loss of generality, we may assume that $\| \p_\alpha\|=1$ for every $\alpha\in \Gamma$. Now, our assumption implies in particular the existence of an isomorphic embedding $T\colon C_0([0,\omega_1))\to X$. Then, for every $\alpha\in \Gamma$, $T^*\p_\alpha$ is a functional on $C_0([0,\omega_1))$, hence it is represented by a measure $\mu_\alpha\in M([0,\omega_1))$. In other words, for every $f\in C_0([0,\omega_1))$ we have
$$\langle\mu_\alpha, f\rangle= \langle\p_\alpha, Tf\rangle.$$
Let us stress that it may very well happen that $\mu_\alpha= \mu_\beta$ for some distinct $\alpha,\beta\in \Gamma$, or that some $\mu_\alpha$ is equal to zero. We begin with some properties of the measures $\mu_\alpha$, $\alpha\in \Gamma$.

\begin{claim}\label{claim: nonzero mass} The set $\{\alpha\in \Gamma \colon \mu_\alpha ([0,\omega_1)) \neq 0\}$ is countable.
\end{claim}
\begin{proof}[Proof of Claim \ref{claim: nonzero mass}] \renewcommand\qedsymbol{$\square$} Assume, by contradiction, that there are uncountably many indices $\alpha\in \Gamma$ such that $\mu_\alpha ([0,\omega_1)) \neq 0$. Therefore, for some $\e >0$, there are infinitely many indices $(\alpha_j)_{j\in \omega}$ with $\left|\mu_{\alpha_j} ([0,\omega_1))\right| \geq \e$ for every $j\in \omega$. Every measure $\mu_{\alpha_j}$ is supported on a countable subset of $[0,\omega_1)$, so there exists $\gamma <\omega_1$ such that $\supp(\mu_{\alpha_j})\subseteq [0,\gamma]$ for all $j\in \omega$. Thus
$$\mu_{\alpha_j}([0,\omega_1))= \mu_{\alpha_j}([0,\gamma])= \langle \mu_{\alpha_j}, \bone_{[0,\gamma]} \rangle = \langle \p_{\alpha_j}, T\bone_{[0,\gamma]} \rangle,$$
where we are using that $\bone_{[0,\gamma]}\in C_0([0,\omega_1))$, as $[0,\gamma]$ is a clopen set. Hence, $\left| \langle \p_{\alpha_j}, T\bone_{[0,\gamma]} \rangle \right|\geq \e$ for every $j\in\omega$. However, this is in contradiction with $\left\{\langle \p_\alpha, T\bone_{[0,\gamma]} \rangle \colon \alpha\in \Gamma\right\}\in c_0(\Gamma)$, by \eqref{eq: coord in c0}, and proves the claim.
\end{proof}

\begin{claim}\label{claim: measures slide} For all $\gamma <\omega_1$ the set $\{\alpha\in \Gamma\colon \mu_\alpha(\{\gamma\}) \neq 0\}$ is countable.
\end{claim}
\begin{proof}[Proof of Claim \ref{claim: measures slide}] \renewcommand\qedsymbol{$\square$} For all $\gamma< \omega_1$, $C([0,\gamma])\coloneqq \{f\in C_0([0,\omega_1))\colon \supp(f)\subseteq [0,\gamma]\}$ is separable; hence, $T(C([0,\gamma]))$ is a separable subspace of $X$. Therefore, there is a countable subset $\Gamma_0$ of $\Gamma$ such that $T(C([0,\gamma])) \subseteq \overline{\rm span}\left\{e_\alpha \right\}_{\alpha\in \Gamma_0}$. Thus, if $\alpha\in \Gamma \setminus \Gamma_0$, we have
$$0= \langle \p_\alpha, Tf\rangle= \langle \mu_\alpha, f\rangle, \quad \text{ for all } f\in C([0,\gamma]).$$
So, $\mu_\alpha \cut_{C([0,\gamma])}=0$ for all $\alpha\in \Gamma \setminus \Gamma_0$, and we are done.
\end{proof}

As a consequence of Claim \ref{claim: measures slide} and the fact that every $\mu_\alpha$ is countably supported, we are now in position to perform the following standard closing-off argument.
\begin{fact}\label{fact: closing off} For every $\gamma<\omega_1$, there is $\tilde{\gamma}<\omega_1$ with $\gamma < \tilde{\gamma}$ and such that
$$\supp(\mu_\alpha)\cap [0,\tilde{\gamma})\neq \emptyset \quad \implies \quad \supp(\mu_\alpha) \subseteq [0,\tilde{\gamma}).$$
\end{fact}
\begin{proof}[Proof of Fact \ref{fact: closing off}] \renewcommand\qedsymbol{$\square$} We first show that there is $\gamma_1< \omega_1$ with $\gamma< \gamma_1$ such that $\supp(\mu_\alpha)\subseteq [0,\gamma_1]$, whenever $\supp(\mu_\alpha) \cap [0,\gamma] \neq \emptyset$. Indeed, according to Claim \ref{claim: measures slide}, the support of only countably many measures $\mu_\alpha$ intersects $[0,\gamma]$. Each of these measures is countably supported, thus the union of the supports is countable and we can take any $\gamma< \gamma_1<\omega_1$ which is larger than such a union. Repeating this argument by induction and setting $\gamma_0 \coloneqq \gamma$, we thus obtain a strictly increasing sequence $(\gamma_k)_{k \in\omega}$ with the property that $\supp(\mu_\alpha)\subseteq [0,\gamma_{k+1}]$, whenever $\supp(\mu_\alpha) \cap [0,\gamma_k] \neq \emptyset$ (for every $k\in \omega$). Finally, $\tilde{\gamma}\coloneqq \sup_{k\in \omega}\gamma_k <\omega_1$ is as desired. Indeed, if $\supp(\mu_\alpha)\cap [0,\tilde{\gamma})\neq \emptyset$, then for some $k\in\omega$ we have $\supp(\mu_\alpha)\cap [0,\gamma_k]\neq \emptyset$, so $\supp(\mu_\alpha)\subseteq [0,\gamma_{k+1}]\subseteq [0,\tilde{\gamma})$.
\end{proof}

Next, we apply Fact \ref{fact: closing off} inductively and build a sequence of consecutive intervals as follows. By Claim \ref{claim: nonzero mass} there are only countably many $\alpha\in\Gamma$ with $\mu_\alpha([0,\omega_1))\neq 0$, so we can choose $a_0<\omega_1$ such that $\supp(\mu_\alpha) \subseteq [0,a_0)$ for all such $\alpha$'s. Set $a_1\coloneqq \tilde{a_0}$ and, inductively, $a_{k+1}\coloneqq \tilde{a_k}$. Hence, we have an increasing sequence $(a_k)_{k\in\omega}\subseteq [0,\omega_1)$ with the property that
\begin{equation}\label{eq: ak support}
    \supp(\mu_\alpha)\cap [0,a_k)\neq \emptyset \quad \implies \quad \supp(\mu_\alpha) \subseteq [0,a_k), \quad \mbox{for all }k\geq 1.    
\end{equation}
We now define intervals $I_k\coloneqq [a_k,a_{k+1})$, for all $k\geq 1$ and notice that \eqref{eq: ak support} readily gives
\begin{equation}\label{eq: Ik support}\tag{$\dagger$}
    \supp(\mu_\alpha)\cap I_k\neq \emptyset \quad \implies \quad \supp(\mu_\alpha) \subseteq I_k, \quad \mbox{for all }k\geq 1.    
\end{equation}
Indeed, $\supp(\mu_\alpha)\cap I_k\neq \emptyset$ and \eqref{eq: ak support} imply $\supp(\mu_\alpha)\subseteq [0,a_{k+1})$; likewise, $\supp(\mu_\alpha)\cap [0,a_k)\neq \emptyset$ would imply $\supp(\mu_\alpha) \subseteq [0,a_k)$, so $\supp(\mu_\alpha)\cap I_k= \emptyset$, a contradiction. Hence, $\supp(\mu_\alpha)\cap [0,a_k)= \emptyset$ and $\supp(\mu_\alpha) \subseteq I_k$. \smallskip

After this preparation, we are ready for the main `staircase' construction. Fix $N\in\N$ and define
\begin{equation}\label{eq: ziqqurat}
    z_N\coloneqq \sum_{k=1}^N \frac{k}{N} \bone_{(a_k,a_{k+1}]} + \sum_{k=N+1}^{2N-1} \frac{2N-k}{N} \bone_{(a_k,a_{k+1}]}.
\end{equation}
Then, $z_N\in C_0([0,\omega_1))$ (since the sets $(a_k,a_{k+1}]$ are clopen), $\|z_N\|=1$, and $\supp(z_N)=(a_1,a_{2N}]\subseteq I_1\cup\dots \cup I_{2N}$. We shall prove that, for $N$ large enough, $Tz_N\in X$ witnesses that the M-basis $\{e_\alpha; \p_\alpha\}_{\alpha\in \Gamma}$ is not $\lambda$-norming. Therefore, fix any $\p\in \Span \{ \p_\alpha\}_{\alpha\in \Gamma}$ with $\|\p\| \leq 1$; we want to estimate $\langle\p, Tz_N \rangle = \langle T^*\p, z_N\rangle$. By definition, we have $T^*\p\in \Span \{\mu_\alpha \}_{\alpha\in\Gamma}$, so let us write $T^*\p = \sum_{\alpha\in F}c_\alpha \mu_\alpha$, where $F\subseteq \Gamma$ is a finite set. In the light of \eqref{eq: Ik support}, for every $\alpha\in F$ we have two cases: either $\supp(\mu_\alpha)$ is contained in $I_k$ for some $k=1,\dots,2N$, or $\supp(\mu_\alpha)$ is disjoint from $I_1\cup\dots \cup I_{2N}$. Therefore, we may write $T^*\p = h + \sum_{k=1}^{2N} h_k$, where $\supp(h_k)\subseteq I_k$ and $\supp(h)\cap (I_1\cup \dots \cup I_{2N})= \emptyset$. Consequently, $\langle h, z_N\rangle=0$ and $h_k(I_k)=0$, due to Claim \ref{claim: nonzero mass} and our choice of $a_0$.

With this information at our disposal, we fix $k=1, \dots, N$ and compute
\begin{eqnarray*}
    \langle h_k, z_N\rangle &=& \frac{k-1}{N} h_k((a_{k-1}, a_k]) + \frac{k}{N} h_k((a_k, a_{k+1}])\\
    &=& - \frac{1}{N} h_k((a_{k-1}, a_k]) + \frac{k}{N} h_k((a_{k-1}, a_{k+1}])\\
    &=& - \frac{1}{N} h_k(\{a_k\}) + \frac{k}{N} h_k(I_k)\\
    &=& - \frac{1}{N} h_k(\{a_k\}) = - \frac{1}{N} T^*\p (\{a_k\}).
\end{eqnarray*}
Similarly, for $k=N+1, \dots, 2N$, we have
\begin{eqnarray*}
    \langle h_k, z_N\rangle &=& \frac{2N+1-k}{N} h_k((a_{k-1}, a_k]) + \frac{2N-k}{N} h_k((a_k, a_{k+1}])\\
    &=& \frac{1}{N} h_k((a_{k-1}, a_k]) + \frac{2N -k}{N} h_k((a_{k-1}, a_{k+1}])\\
    &=& \frac{1}{N} h_k(\{a_k\}) + \frac{2N -k}{N} h_k(I_k) = \frac{1}{N} T^*\p (\{a_k\}).
\end{eqnarray*}
Consequently, when we put the two cases together, we obtain
\begin{equation*}
    |\langle \p, Tz_N\rangle| = |\langle T^*\p, z_N\rangle| \leq \frac{1}{N} \sum_{k=1}^{2N} \left| T^*\p (\{a_k\})\right| \leq \frac{1}{N} \|T^*\p\| \leq \frac{1}{N} \|T\|,
\end{equation*}
for every $\p\in \Span\{\p_\alpha\}_{\alpha\in\Gamma}$ with $\|\p\|\leq 1$. On the other hand, we have
\begin{equation*}
    \|T^{-1}\|^{-1}\leq \|Tz_N\|\leq \frac{1}{\lambda} \sup\left\{ |\langle \p, Tz_N\rangle|\colon \p\in \Span\{\p_\alpha\}_{\alpha\in\Gamma}, \|\p\|\leq 1 \right\} \leq \frac{1}{\lambda N} \|T\|,
\end{equation*}
which, for large enough $N$, gives us the desired contradiction and concludes the proof.
\end{proof}

\section{Chains of clopen sets, P-points, and applications to \texorpdfstring{$C(K)$}{C(K)} spaces}\label{sec: consequences}
In this section we obtain some consequences of Theorem \ref{mth: AP} and in particular we give explicit sufficient conditions on $K$ to guarantee that $C(K)$ does not embed in a space with a norming M-basis. In particular, we prove clauses (\ref{mth: ordinal}) and (\ref{mth: p-point}) of Theorem \ref{mth: C(K)}. In the last part of the section, we discuss a possible approach to Problem \ref{pb: Gilles} using these results.

We begin with the following simple consequence of Theorem \ref{mth: AP}, that we already mentioned in the Introduction.
\begin{proposition}\label{prop: cont image} Let $K$ be compact and such that $[0,\omega_1]$ is a continuous image of $K$. Then $C(K)$ does not embed in a Banach space with a norming M-basis.
\end{proposition}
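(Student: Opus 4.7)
The proposition is a one-line reduction to Theorem~\ref{mth: AP} via a standard construction, so the plan is short. The key ingredient is the classical fact that a continuous surjection between compact spaces induces an isometric embedding of the corresponding spaces of continuous functions.

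First, I would fix a continuous surjection $\varphi \colon K \to [0,\omega_1]$ (which exists by assumption) and define the composition operator
\begin{equation*}
    C_\varphi \colon C([0,\omega_1]) \to C(K), \qquad C_\varphi(f) \coloneqq f\circ \varphi.
\end{equation*}
Continuity of $\varphi$ ensures $C_\varphi$ is well-defined, and linearity is immediate. Moreover, because $\varphi$ is surjective, for every $f\in C([0,\omega_1])$ we have $\|f\circ \varphi\|_\infty = \sup_{x\in K}|f(\varphi(x))| = \sup_{t\in [0,\omega_1]}|f(t)| = \|f\|_\infty$, so $C_\varphi$ is an isometric embedding. Consequently, $C(K)$ contains an isomorphic (in fact, isometric) copy of $C([0,\omega_1])$.

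Second, assume towards a contradiction that $C(K)$ embeds in a Banach space $X$ that admits a norming M-basis, and let $S\colon C(K) \to X$ denote this embedding. Then $S\circ C_\varphi \colon C([0,\omega_1]) \to X$ is an isomorphism onto its range, so $X$ contains an isomorphic copy of $C([0,\omega_1])$. This contradicts Theorem~\ref{mth: AP}, and the proof is complete.

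There is no real obstacle in this argument: the content of the proposition is entirely in Theorem~\ref{mth: AP}, and the role of the present statement is just to package that result in a form that depends only on the existence of a continuous surjection onto $[0,\omega_1]$, which is a much more flexible hypothesis to verify in applications (as will be exploited for the ordinal intervals $[0,\eta]$ and for compacta with a P-point in the remainder of the section).
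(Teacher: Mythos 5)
Your proof is correct and follows exactly the same route as the paper: compose with the continuous surjection to embed $C([0,\omega_1])$ isometrically into $C(K)$, then invoke Theorem~\ref{mth: AP}. The only difference is that you spell out the (standard) verification that $C_\varphi$ is an isometry and the final transitivity-of-embeddings step, which the paper leaves implicit.
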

\begin{proof} Take a continuous surjection $\p\colon K \to [0,\omega_1]$. Then the map $f\mapsto f\circ \p$ defines an isometric embedding of $C([0,\omega_1])$ in $C(K)$, so Theorem \ref{mth: AP} concludes the proof.
\end{proof}

We now enter the main part of the section, where we characterise those compact spaces that admit a continuous image onto $[0,\omega_1]$. The crucial notion will be that of a chain of clopen sets, that we now introduce.
\begin{definition} Let $K$ be a compact topological space. A \emph{strict $\lambda$-chain of clopen sets} (\emph{$\lambda$-scc}, for short) is a collection $(C_\alpha)_{\alpha<\lambda}$ of nonempty clopen subsets of $K$ such that
$$C_\alpha \subsetneq C_\beta\qquad \text{ if } \alpha<\beta.$$
A $\lambda$-scc is \emph{thin} if $\overline{\bigcup_{\alpha<\lambda}C_\alpha}$ is a clopen set in $K$ and
$$\overline{\bigcup_{\alpha<\lambda}C_\alpha} \setminus \bigcup_{\alpha<\lambda}C_\alpha \text{ is a singleton}.$$
\end{definition}

Notice that if $\beta< \lambda$ is a limit ordinal, then $\bigcup_{\alpha < \beta} C_\alpha\neq C_\beta$, because alternatively $\{C_\alpha\}_{\alpha< \beta}$ would be an open cover of the compact $C_\beta$, with no finite subcover. By the same reason, if $\lambda$ is limit $\bigcup_{\alpha<\lambda}C_\alpha$ cannot be a closed set; therefore the set
$$\overline{\bigcup_{\alpha<\lambda}C_\alpha} \setminus \bigcup_{\alpha<\lambda}C_\alpha$$
is never empty. We will see the distinction between the two notions in the proof of Theorem \ref{th: scc and P-point} and in Example \ref{ex: scc} below.

Before our result, we need to recall another topological notion. A point $p$ in a topological space $K$ is a \emph{P-point} if $p$ is not isolated and, for every countable family $(U_j)_{j\in \omega}$ of neighbourhoods of $p$, $\bigcap_{j\in \omega} U_j$ is a neighbourhood of $p$ as well. The archetypal example of a P-point is $\omega_1$ in $[0,\omega_1]$.

\begin{theorem}\label{th: scc and P-point} Let $K$ be compact and consider the following conditions:
\begin{romanenumerate}
    \item\label{item: cont image} $[0,\omega_1]$ is a continuous image of $K$;
    \item\label{item: scc} $K$ admits an $\omega_1$-scc;
    \item\label{item: thin scc} $K$ admits a thin $\omega_1$-scc;
    \item\label{item: P-point} $K$ has a P-point.
\end{romanenumerate}
Then {\normalfont \eqref{item: thin scc}}$\implies${\normalfont \eqref{item: scc}}$\iff${\normalfont \eqref{item: cont image}} and {\normalfont \eqref{item: thin scc}}$\implies${\normalfont \eqref{item: P-point}}. If $K$ is $0$-dimensional, then {\normalfont \eqref{item: P-point}}$\implies${\normalfont \eqref{item: scc}}.\\
Finally, if $K$ is scattered and Valdivia, then {\normalfont \eqref{item: scc}}$\implies${\normalfont \eqref{item: thin scc}}, hence all conditions are equivalent.
\end{theorem}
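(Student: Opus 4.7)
The implication (iii)$\Rightarrow$(ii) is immediate since every thin $\omega_1$-scc is in particular an $\omega_1$-scc. For (i)$\iff$(ii), given an $\omega_1$-scc $(C_\alpha)_{\alpha<\omega_1}$ I define $\varphi\colon K\to[0,\omega_1]$ by $\varphi(x):=\min\{\alpha<\omega_1\colon x\in C_\alpha\}$ (well-defined by the well-orderedness of the ordinals) and $\varphi(x):=\omega_1$ otherwise. Continuity is immediate from $\varphi^{-1}([0,\alpha])=C_\alpha$ being clopen; surjectivity is ensured by the compactness remarks recorded right before the statement (each $C_{\alpha+1}\setminus C_\alpha$ is non-empty; at a limit $\lambda<\omega_1$ the chain $(C_\beta)_{\beta<\lambda}$ cannot cover the compact $C_\lambda$; the full chain cannot cover $K$). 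Conversely, given $\varphi$, the sets $C_\alpha:=\varphi^{-1}([0,\alpha])$ form an $\omega_1$-scc, since each $[0,\alpha]$ with $\alpha<\omega_1$ is clopen in $[0,\omega_1]$.

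\textbf{P-point implications.} For (iii)$\Rightarrow$(iv), set $U:=\overline{\bigcup_\alpha C_\alpha}$ (clopen) and $\{p\}:=U\setminus\bigcup_\alpha C_\alpha$; the point $p$ is non-isolated, being a limit of the set $\bigcup_\alpha C_\alpha$ to which it does not belong. Given any open $W\ni p$, the compact set $U\setminus W$ lies in $U\setminus\{p\}=\bigcup_\alpha C_\alpha$, and since the $C_\alpha$'s form a chain, compactness yields $U\setminus W\subseteq C_{\alpha_W}$ for some $\alpha_W<\omega_1$, so $U\setminus C_{\alpha_W}\subseteq W$ is itself a clopen neighbourhood of $p$. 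For countably many $W_n$, the supremum $\sup_n\alpha_{W_n}$ remains below $\omega_1$ and $U\setminus C_{\sup\alpha_{W_n}}\subseteq\bigcap_n W_n$. For (iv)$\Rightarrow$(ii) with $K$ $0$-dimensional, I would transfinitely construct a strictly decreasing chain $(D_\alpha)_{\alpha<\omega_1}$ of clopen neighbourhoods of the P-point $p$: at successor stages I separate off a new point of $D_\alpha$ from $p$ by $0$-dimensionality (this is possible since $p$ is non-isolated); at a limit $\lambda$ the P-point property makes $\bigcap_{\beta<\lambda}D_\beta$ a neighbourhood of $p$, inside which $0$-dimensionality produces a clopen $D_\lambda\ni p$ satisfying $D_\lambda\subseteq D_{\beta+1}\subsetneq D_\beta$ for every $\beta<\lambda$. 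The complements $C_\alpha:=K\setminus D_{\alpha+1}$ form the required scc.

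\textbf{Main implication: the construction.} For (ii)$\Rightarrow$(iii) with $K$ scattered Valdivia, fix $\varphi\colon K\to[0,\omega_1]$ a continuous surjection (from (i)$\iff$(ii)). My plan is to locate a point $p\in\varphi^{-1}(\omega_1)$ that is isolated in $\varphi^{-1}(\omega_1)$ yet non-isolated in $K$; from such $p$ the thin scc emerges as follows. By $0$-dimensionality fix a clopen $V\ni p$ with $V\cap\varphi^{-1}(\omega_1)=\{p\}$. The clopen sets $V_\alpha:=V\cap\varphi^{-1}((\alpha,\omega_1])$ form a decreasing neighbourhood base at $p$: for any open $W\ni p$, the compact set $V\cap(K\setminus W)$ is disjoint from $\varphi^{-1}(\omega_1)$, so $\varphi(V\cap(K\setminus W))$ is a compact (hence bounded) subset of $[0,\omega_1)$, yielding $V_{\alpha_W}\subseteq W$ for some $\alpha_W<\omega_1$. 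A standard transfinite refinement (possible because the sequence $(V_\alpha)$ cannot eventually collapse to $\{p\}$, for that would make $p$ isolated in $V$) extracts a strictly decreasing clopen chain $(D_\xi)_{\xi<\omega_1}$ with $\bigcap_\xi D_\xi=\{p\}$. Then $C_\xi:=V\setminus D_\xi$ is a strict $\omega_1$-scc in $K$ with $\bigcup_\xi C_\xi=V\setminus\{p\}$, $\overline{\bigcup_\xi C_\xi}=V$ (the equality because $p$ is non-isolated), and boundary exactly $\{p\}$; this is a thin $\omega_1$-scc.

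\textbf{Main implication: the obstacle.} The delicate step is producing the point $p$. If $\varphi^{-1}(\omega_1)$ is finite, a compactness trick succeeds at once: were every point of $\varphi^{-1}(\omega_1)$ isolated in $K$, its clopen complement would be compact and continuously surject to the non-compact $[0,\omega_1)$, which is absurd. In the infinite case the Valdivia hypothesis enters essentially, and my plan is to combine three ingredients. First, $K$ is not Eberlein, since Eberlein compacta are closed under continuous images while $[0,\omega_1]$ is not Eberlein; so by Alster's theorem \cite{Alster} $K$ is not Corson, whence by Deville--Godefroy \cite{DG} the Valdivia $K$ contains a closed homeomorphic copy of $[0,\omega_1]$. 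Second, the Fr\'echet--Urysohn, countably closed $\Sigma$-subset $\Sigma(K)$ forces every boundary $\overline{\bigcup C_\alpha}\setminus\bigcup C_\alpha$ to lie in $K\setminus\Sigma(K)$: a boundary point $p\in\Sigma(K)$ would, by Fr\'echet--Urysohnness, be reached by a sequence $(x_n)\subseteq\bigcup C_\alpha\cap\Sigma(K)$ with $x_n\in C_{\alpha_n}$, and setting $\alpha:=\sup_n\alpha_n<\omega_1$ would place $p\in\overline{C_\alpha}=C_\alpha\subseteq\bigcup C_\beta$, contradicting $p$ being on the boundary. Third, I would combine this localisation with the position of the embedded $[0,\omega_1]$ and the scattered structure of $\varphi^{-1}(\omega_1)$ (iterating on its Cantor--Bendixson derivatives) to pinpoint the required $p$. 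Tying these three threads into a clean argument is the most delicate part of the proof, and is where I expect the bulk of the technical effort to lie.
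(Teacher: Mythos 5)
All of the implications except the last one are handled correctly and essentially as in the paper: (iii)$\Rightarrow$(ii) is trivial, (i)$\Leftrightarrow$(ii) via $\varphi(x)=\min\{\alpha\colon x\in C_\alpha\}$ and $C_\alpha=\varphi^{-1}([0,\alpha])$, (iii)$\Rightarrow$(iv) by the compactness argument on $U\setminus W$, and (iv)$\Rightarrow$(ii) by the transfinite chain of clopen neighbourhoods. Your construction of a thin scc \emph{from} a point $p$ that is isolated in $\varphi^{-1}(\omega_1)$ but not in $K$ is also sound (it is essentially the paper's construction of the sets $D_\alpha=C_\alpha\cap U$).

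However, the implication (ii)$\Rightarrow$(iii) for scattered Valdivia $K$ — the only place where the Valdivia hypothesis is actually needed — is left with a genuine, and explicitly acknowledged, gap: you never produce the point $p$. The three ingredients you list do not assemble into a proof as stated. Scatteredness gives a point $p$ isolated in the boundary $B=\overline{\bigcup_\alpha C_\alpha}\setminus\bigcup_\alpha C_\alpha$, but what your construction requires is a point isolated in the much larger set $K\setminus\bigcup_\alpha C_\alpha=\varphi^{-1}(\omega_1)$; these are different unless $\overline{\bigcup_\alpha C_\alpha}$ is \emph{open}. The paper's Example \ref{ex: scc}(c), $K=[0,\omega_1]\cup\{x_n\}_{n\in\omega}$ with $x_n\to\omega_1$, shows exactly this failure mode: the unique boundary point $\omega_1$ is isolated in $B=\{\omega_1\}$ yet is a limit of points of $\varphi^{-1}(\omega_1)\setminus B$, and indeed no thin scc exists there. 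The missing idea is the paper's Claim \ref{claim: the closure is clopen}: for a scattered Valdivia $K$, the set $C_{\omega_1}\coloneqq\overline{\bigcup_\alpha C_\alpha}$ is \emph{clopen}. Your ``second ingredient'' (Fr\'echet--Urysohn plus countable closedness of $\Sigma(K)$ forces the boundary into $K\setminus\Sigma(K)$) is the right first half of that claim — it shows $\bigcup_\alpha(C_\alpha\cap\Sigma(K))$ is clopen \emph{in $\Sigma(K)$} — but you then need the fact that $K$ is the \v{C}ech--Stone compactification of $\Sigma(K)$ (\cite[Proposition 1.9]{Kalenda survey}, \cite[Corollary 3.6.5]{Eng}) to conclude that the closure in $K$ is clopen in $K$. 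Once $C_{\omega_1}$ is clopen, an isolated point of $B$ automatically has a clopen neighbourhood $U\subseteq C_{\omega_1}$ with $U\setminus\bigcup_\alpha C_\alpha=\{p\}$, which is precisely the input your construction needs. Your ``first ingredient'' (that $K$ contains a copy of $[0,\omega_1]$ by Alster and Deville--Godefroy) and the proposed Cantor--Bendixson iteration on $\varphi^{-1}(\omega_1)$ play no role in the actual argument and do not obviously lead to one.
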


It is natural to ask whether the equivalence in the theorem can be used to give a positive answer to Problem \ref{pb: Gilles}, at least when $K$ is Valdivia. We will see in Remark \ref{rmk: Kro and around} below that this is not the case, but in a sense we do cover most scattered Valdivia compacta.

\begin{proof} That \eqref{item: thin scc}$\implies$\eqref{item: scc} is obvious.

\eqref{item: cont image}$\implies$\eqref{item: scc}: Let $\p\colon K\to[0,\omega_1]$ be a continuous surjection and define $C_\alpha\coloneqq \p^{-1}([0,\alpha])$, for all $\alpha< \omega_1$. Then $(C_\alpha)_{\alpha<\omega_1}$ is an $\omega_1$-scc.

Conversely, for \eqref{item: scc}$\implies$\eqref{item: cont image}, take an $\omega_1$-scc $(C_\alpha)_{\alpha<\omega_1}$ and define a function $\p\colon K \to [0,\omega_1]$ as follows:
\begin{equation*}
    \p(x)\coloneqq \begin{cases} \min\{\alpha<\omega_1 \colon x\in C_\alpha\}& \mbox{if } x\in \bigcup_{\alpha<\omega_1} C_\alpha,\\
    \omega_1 & \mbox{elsewhere.}
    \end{cases}
\end{equation*}
Then, by definition, $C_\alpha=\{x\in K\colon \p(x)\leq \alpha\}$. Therefore, when $\delta< \alpha< \omega_1$, we have that $\p^{-1}((\delta, \alpha])= C_\alpha \setminus C_\delta$ is a clopen set; similarly, for $\delta< \omega_1$, $\p^{-1}((\delta, \omega_1])= K \setminus C_\delta$ is also clopen. Thus $\p$ is continuous. Finally, since $\p^{-1}(\{0\})= C_0\neq \emptyset$ and $\p^{-1}(\{\alpha+1\})= C_{\alpha+1}\setminus C_\alpha\neq \emptyset$ we conclude that the image of $\p$ is dense in $[0,\omega_1]$. By compactness of $K$, $\p$ must then be surjective, which proves the implication.

\eqref{item: thin scc}$\implies$\eqref{item: P-point}: Let $(C_\alpha)_{\alpha<\omega_1}$ be a thin $\omega_1$-scc, define the clopen set $C_{\omega_1}\coloneqq \overline{\bigcup_{\alpha< \omega_1} C_\alpha}$, and let $p\in K$ be the unique point such that $\{p\}= C_{\omega_1} \setminus{\bigcup_{\alpha< \omega_1} C_\alpha}$. We claim that $p$ is a P-point. First, $p$ is not an isolated point, since, by definition, $p\in \overline{\bigcup_{\alpha< \omega_1} C_\alpha} \setminus \bigcup_{\alpha< \omega_1} C_\alpha$. Now, for $\alpha<\omega_1$ consider the clopen set $D_\alpha \coloneqq C_{\omega_1}\setminus C_\alpha$; notice that, by definition, $\bigcap_{\alpha<\omega_1}D_\alpha=\{p\}$. Therefore, if $(U_j)_{j\in \omega}$ is any sequence of neighbourhoods of $p$, then we have $\bigcap_{\alpha< \omega_1} D_\alpha \subseteq U_j$, for any $j\in\omega$. By a standard compactness argument, we deduce that there exists $\alpha_j< \omega_1$ such that $D_{\alpha_j} \subseteq U_j$ (the sets $(D_\alpha \setminus U_j) _{\alpha< \omega_1}$ are a decreasing family of closed sets with empty intersection). Finally, letting $\alpha\coloneqq \sup _{j\in\omega} \alpha_j$, we conclude that $D_\alpha \subseteq \bigcap_{j\in\omega} U_j$, whence $\bigcap_{j\in \omega} U_j$ is a neighbourhood of $p$. Thus $p$ is a P-point.

Next, we assume that $K$ is $0$-dimensional and we shall prove \eqref{item: P-point}$\implies $\eqref{item: scc}. Let $p\in K$ be a P-point; hence, every neighbourhood of $p$ is an infinite set. We will build a chain $(D_\alpha)_{\alpha< \omega_1}$ of clopen neighbourhoods of $p$ such that
\begin{equation*}
    D_\beta \subsetneq D_\alpha \quad \mbox{if } \alpha< \beta< \omega_1. 
\end{equation*}
Indeed, start with any clopen set $D_0\neq K$ that contains $p$. Having constructed $(D_\alpha)_{\alpha< \gamma}$, for some $\gamma< \omega_1$, we distinguish two cases. If $\gamma= \gamma' +1$, then we take a clopen neighbourhood $D_\gamma \subsetneq D_{\gamma'}$ of $p$. If $\gamma$ is a limit, by our assumption $\bigcap_{\alpha< \gamma} D_\alpha$ is a neighbourhood of $p$; hence, we can take a clopen set $D_\gamma \subsetneq \bigcap_{\alpha< \gamma} D_\alpha$ containing $p$. Finally, from the chain $(D_\alpha)_{\alpha< \omega_1}$, we define $C_\alpha\coloneqq K \setminus D_\alpha$ and $(C_\alpha)_{\alpha< \omega_1}$ is the desired $\omega_1$-scc.

Finally it remains to show \eqref{item: scc}$\implies$\eqref{item: thin scc}, whenever $K$ is scattered and Valdivia. Let $K$ be a scattered Valdivia compact space and $\Sigma(K)$ be a $\Sigma$-subset of $K$. Take an $\omega_1$-scc $(C_{\alpha})_{\alpha <\omega_1}$ in $K$. 
\begin{claim}\label{claim: the closure is clopen} $C_{\omega_1} \coloneqq\overline{\bigcup_{\alpha< \omega_1} C_{\alpha}}$ is a clopen subset of $K$.
\end{claim}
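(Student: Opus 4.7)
The plan is to prove that $C_{\omega_1} = \overline{A}$, where $A := \bigcup_{\alpha < \omega_1} C_\alpha$, is clopen in $K$. Since $A$ is open (union of clopens), $C_{\omega_1}$ is automatically closed; the content of the claim is its openness. Setting $E := C_{\omega_1} \setminus A$ (which is closed in $K$), it suffices to exhibit, for every $p \in E$, a clopen neighborhood of $p$ inside $\overline{A}$—for $p \in A$ any $C_\alpha$ containing $p$ does the job.

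The first step in my plan is to show that $E \cap \Sigma(K) = \emptyset$, using the Fréchet--Urysohn property of $\Sigma(K)$ in the same spirit as Claim~\ref{claim: measures slide}. Since $A$ is open and $\Sigma(K)$ is dense in $K$, the set $A \cap \Sigma(K)$ is dense in $A$; hence any $p \in E \cap \Sigma(K)$ lies in $\overline{A \cap \Sigma(K)}^{\Sigma(K)}$. Fréchet--Urysohn yields a sequence $(x_n) \subseteq A \cap \Sigma(K)$ converging to $p$; writing $x_n \in C_{\alpha_n}$ with $\alpha_n < \omega_1$ and letting $\alpha := \sup_n \alpha_n < \omega_1$, closedness of $C_\alpha$ forces $p \in C_\alpha \subseteq A$, contradicting $p \in E$.

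The second (and main) step is to show that each $p \in E$ is interior to $\overline{A}$. Equivalently, setting $W := K \setminus \overline{A}$, one needs $E \cap \overline{W} = \emptyset$. Suppose towards contradiction that $p \in E \cap \overline{W}$; by the previous step $p \notin \Sigma(K)$. A preliminary observation I would record is that for every clopen neighborhood $B$ of $p$, the chain $(B \cap C_\alpha)_{\alpha < \omega_1}$ cannot stabilize: if $B \cap C_\alpha = B \cap C_{\alpha_0}$ for all $\alpha \geq \alpha_0$, then $B \cap A = B \cap C_{\alpha_0}$ is clopen and $B \cap \overline{A} = \overline{B \cap A} = B \cap C_{\alpha_0} \subseteq A$, forcing $p \in A$, a contradiction. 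Hence $(B \cap C_\alpha)$ strictly increases on a cofinal subset of $\omega_1$ for every clopen $B \ni p$, which encodes a copy of an $\omega_1$-scc inside every such $B$.

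The hard part will be turning this structural information into a contradiction. Since $p \notin \Sigma(K)$ one cannot invoke Fréchet--Urysohn at $p$ directly, so I would proceed via a closing-off construction analogous to Fact~\ref{fact: closing off}: inductively choose a decreasing sequence of clopen neighborhoods $(B_n)$ of $p$ together with points $y_n \in B_n \cap A \cap \Sigma(K)$ and $z_n \in B_n \cap W \cap \Sigma(K)$ (using that isolated points of $K$ inside $W$ are dense in $W$, giving density of $W \cap \Sigma(K)$ in $W$, together with the cofinal growth above to arrange the $y_n$ to belong to $C_{\alpha_{n+1}} \setminus C_{\alpha_n}$ for a strictly increasing cofinal $(\alpha_n)$). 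By sequential compactness one extracts convergent subsequences; countable closedness of $\Sigma(K)$ places their limits inside $\Sigma(K)$, and one aims to identify one of these limit points either with $p$ (contradicting $p \notin \Sigma(K)$) or with a point of $E \cap \Sigma(K)$ (contradicting Step~1). The main obstacle I anticipate is controlling these limits so that at least one of them actually falls into $E$, which is where scatteredness (giving clopen neighborhoods in which $p$ is the only point of $E'\coloneqq K\setminus\Sigma(K)$, once one reduces to an isolated point of $E'$) should be leveraged to force the limit point to coincide with $p$ or lie in $E$.
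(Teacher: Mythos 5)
Your Step 1 is correct and reproduces the first half of the paper's own argument: the Fr\'echet--Urysohn property of $\Sigma(K)$ shows that $\bigcup_{\alpha<\omega_1}\bigl(C_\alpha\cap\Sigma(K)\bigr)$ is closed, hence clopen, in $\Sigma(K)$, which is equivalent to your statement $E\cap\Sigma(K)=\emptyset$. The genuine gap is in Step 2, and it is not merely technical. At a point $p\in E$ you have, by construction, $p\notin\Sigma(K)$, so no sequential tool is available at $p$; the subsequential limits you propose to extract (they do exist, since scattered compacta are sequentially compact, and they lie in $\Sigma(K)$ by countable closedness) necessarily land in $A\cap\Sigma(K)$ or in $W\cap\Sigma(K)$ --- by your own Step 1 they cannot land in $E$ --- so the construction produces no contradiction, as you yourself anticipate. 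Worse, the statement you are trying to prove by bare-hands topology is false for a general compact space with a dense subspace split into two clopen pieces: in $[0,1]$, the trace of $\mathbb{Q}$ partitioned at an irrational point gives two clopen subsets of a dense subspace whose closures meet. So some property specific to the pair $(K,\Sigma(K))$ must be invoked, and your outline contains none.

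The missing ingredient is that a Valdivia compact space $K$ is the \v{C}ech--Stone compactification of any dense $\Sigma$-subset $\Sigma(K)$, \cite[Proposition 1.9]{Kalenda survey}. Granting this, Step 2 is immediate and no closing-off construction is needed: your Step 1 yields the partition $\Sigma(K)=\bigl(A\cap\Sigma(K)\bigr)\sqcup\bigl(W\cap\Sigma(K)\bigr)$ into complementary clopen (hence completely separated) subsets of $\Sigma(K)$, so their closures in $K=\beta\Sigma(K)$ are disjoint \cite[Corollary 3.6.5]{Eng}; since these closures cover $K$ and $\overline{A\cap\Sigma(K)}=\overline{A}$ (as $A$ is open and $\Sigma(K)$ is dense), the set $\overline{A}=C_{\omega_1}$ is clopen. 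This is exactly the route taken in the paper's proof of Claim~\ref{claim: the closure is clopen}; replacing the second half of your argument by this observation completes the proof.
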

\begin{proof}[Proof of Claim \ref{claim: the closure is clopen}] \renewcommand\qedsymbol{$\square$} For each $\alpha<\omega_1$, we define $E_\alpha \coloneqq C_\alpha\cap \Sigma(K)$; notice that $\overline{E_\alpha}= C_\alpha$. The set $\bigcup_{\alpha<\omega_1}E_{\alpha}$ is, by definition, open in $\Sigma(K)$. Let us prove that $\bigcup_{\alpha<\omega_1}E_{\alpha}$ is also closed in $\Sigma(K)$. Indeed, since $\Sigma(K)$ is Fr\'{e}chet--Urysohn, if $t\in \overline{\bigcup _{\alpha< \omega_1} E_\alpha}$ (where the closure is taken in $\Sigma(K)$), there exists a sequence $(t_n)_{n\in\omega}\subseteq \bigcup_{\alpha<\omega_1}E_\alpha$ that converges to $t$. Let $\alpha_n<\omega_1$ be such that $t_n\in E_{\alpha_n}$. Letting $\alpha=\sup_{n\in\omega}\alpha_n$, we get $(t_n)_{n\in\omega}\subseteq E_\alpha$, thus $t\in E_\alpha$. Therefore, $\bigcup_{\alpha< \omega_1} E_\alpha$ is a clopen subset of $\Sigma(K)$. Next, we need to recall that $K$ is the \v{C}ech-Stone compactification of $\Sigma(K)$ (see \emph{e.g.}, \cite[Proposition 1.9]{Kalenda survey}), whence it is standard to deduce that $\overline{\bigcup _{\alpha< \omega_1} E_\alpha}$ is a clopen subset of $K$ (\cite[Corollary 3.6.5]{Eng}). Finally observing that  $\overline{\bigcup_{\alpha<\omega_1}E_{\alpha}}=\overline{\bigcup_{\alpha<\omega_1}C_{\alpha}}$ (recall that $C_{\alpha}=\overline{E_\alpha}$), we obtain the claim.
\end{proof}

As we already mentioned, $\bigcup_{\alpha< \omega_1} C_\alpha$ is not a closed set, so $C_{\omega_1}\setminus \bigcup_{\alpha< \omega_1} C_\alpha\neq \emptyset$. Since $K$ is scattered, we can pick an isolated point $p\in C_{\omega_1}\setminus \bigcup_{\alpha< \omega_1} C_\alpha $. In particular, there exists a clopen subset $U\subseteq C_{\omega_1}$ of $K$ such that $U\setminus \bigcup_{\alpha< \omega_1} C_\alpha=\{p\}$. In other words, we have $U\setminus \{p\}\subseteq \bigcup_{\alpha< \omega_1} C_\alpha$. We now consider the clopen sets $D_\alpha \coloneqq C_\alpha\cap U$ and $D_{\omega_1} \coloneqq U$. By definition we have $D_{\omega_1}\setminus \{p\}=\bigcup_{\alpha< \omega_1} D_\alpha$, which implies $\overline{\bigcup_{\alpha< \omega_1} D_\alpha}= D_{\omega_1}$ (because $p\in \overline{\bigcup_{\alpha< \omega_1} D_\alpha}$). Moreover, $(D_\alpha)_{\alpha< \omega_1}$ is clearly a non-decreasing family of clopen subsets of $K$. Finally, since $\bigcup_{\alpha< \omega_1} D_\alpha$ is not a closed set, the chain $(D_\alpha)_{\alpha< \omega_1}$ cannot stabilise. Thus, up to passing to an uncountable subfamily, we can additionally assume that $(D_\alpha)_{\alpha< \omega_1}$ constitutes an $\omega_1$-scc. The fact that such a chain is also thin has been proved above, so \eqref{item: thin scc} holds and the proof is concluded.
\end{proof}

Next, we shall give some examples to show that the implications in the previous theorem are best possible without the additional assumptions on $K$.
\begin{example}\label{ex: scc}
\begin{enumerate}
    
    \item[(a)] The long line (see, \emph{e.g.}, \cite[Theorem 5.2]{kubis retractions}) is a connected (Valdivia) compact set with a P-point. Hence (\ref{item: P-point})$\centernot \implies$(\ref{item: thin scc}), thus also (\ref{item: P-point})$\centernot \implies$(\ref{item: scc}) for spaces that are not $0$-dimensional.
    \item[(b)] The remainder $\omega^*\coloneqq \beta\omega \setminus \omega$ of the \v{C}ech--Stone compactification of $\omega$ admits a continuous image onto $[0,\omega_1]$, by Parovi\v{c}enko's theorem \cite{Parovicenko}. Yet, it is consistent that $\omega^*$ admits no P-point, \cite{Wimmers}. Aside for giving another (consistent) example that (\ref{item: scc})$\centernot \implies$(\ref{item: P-point}), it is a typical example of space admitting $[0,\omega_1]$ as continuous image, without containing copies of it.
    \item[(c)] Consider the compact topological space $K\coloneqq [0,\omega_1]\cup\{x_n\}_{n\in \omega}$, where $(x_n)$ is a sequence that converges to $\omega_1$. Then $C_\alpha\coloneqq [0,\alpha]$ ($\alpha< \omega_1$) defines an $\omega_1$-scc, but $K$ has no P-point, hence it also does not have a thin $\omega_1$-scc. Notice that such a $K$ is scattered, but not Valdivia, \cite[Example 1.10(iii)]{Kalenda survey}. So (\ref{item: scc}) implies neither (\ref{item: thin scc}) nor (\ref{item: P-point}), even for scattered compacta.
    \item[(d)] Finally, we give an example that (\ref{item: scc})$\centernot \implies$(\ref{item: thin scc}), for Valdivia, $0$-dimensional (non-scattered) compacta. Consider $K=[0,\omega_1]\times 2^{\omega}$; it is Valdivia and 0-dimensional, since both $[0,\omega_1]$ and $2^{\omega}$ are. An example of $\omega_1$-scc in $K$ is given by $C_\alpha \coloneqq [0,\alpha] \times 2^\omega$ ($\alpha< \omega_1$), thus $K$ satisfies \eqref{item: scc}. On the other hand, since $\{\alpha\}\times 2^{\omega}$ is metrizable for every $\alpha\leq\omega_1$, $K$ does not contain P-points. In particular, $K$ does not satisfy (\ref{item: thin scc}).
\end{enumerate}
\end{example}

\begin{remark} Example \ref{ex: scc}(c) and the results of this section imply that $C(\omega^*)= \ell_\infty / c_0$ does not embed in a Banach space with a norming M-basis. However, this is not a new result, since $\ell_\infty / c_0$ has no rotund norm \cite{Bourgain}, hence it does not embed in a Banach space with an M-basis at all. By the same reason, $\ell_\infty$ does not embed in a space with norming M-basis, as it admits no LUR norm, \cite[Theorem II.7.10]{DGZ} (while it embeds in a Banach space with an M-basis \cite[Theorem 4.63]{FHHMZ}). Notice however that $[0,\omega_1]$ is not continuous image of $\beta \omega$ (the latter being separable), hence our results give no information about $\ell_\infty$.
\end{remark}

\begin{remark}\label{rmk: Kro and around} One can be tempted to try applying Theorem \ref{th: scc and P-point} in combination with Proposition \ref{prop: cont image} to solve Problem \ref{pb: Gilles} at least for Valdivia compacta. Indeed, by the result of Alster that we mentioned already, it is enough to prove that $K$ is Corson. However, since $K$ is Valdivia, it either contains $[0,\omega_1]$, or it is Corson, by \cite{DG}. Thus, we can assume that $K$ contains $[0,\omega_1]$ and it is natural to try using this additional information to find an $\omega_1$-scc, or directly a P-point (indeed, then Theorem \ref{th: scc and P-point} and Proposition \ref{prop: cont image} would give the contradiction that $C(K)$ has no norming M-basis). However, this is (perhaps surprisingly) not possible in general: in fact the compact $\mathcal{K}_\varrho$ constructed in \cite{HRST} is a scattered Valdivia (even semi-Eberlein) compactum with no P-point. On the other hand, scattered Valdivia compacta with no weak P-points are Corson (see Lemma \ref{lem: no weak P-point} below); therefore, the unique scattered Valdivia compacta for which we can't answer Problem \ref{pb: Gilles} are those admitting some weak P-point, but no P-points (the compact $\mathcal{K}_\varrho$ from \cite{HRST} is such an example).
\end{remark}

Before we state and prove the lemma, we recall that a point $p$ in a topological space $K$ is a \textit{weak P-point} if $p$ is not isolated and it is accumulation point of no countable set in $K\setminus \{p\}$.

\begin{lemma}\label{lem: no weak P-point} Let $K$ be a scattered Valdivia compactum. If $K$ admits no weak P-point, then $K$ is Corson.
\end{lemma}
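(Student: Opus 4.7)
The plan is to prove the contrapositive: if $K$ is scattered Valdivia and not Corson, then $K$ admits a weak P-point. Fix a $\Sigma$-subset $\Sigma(K)$ of $K$; recall that $\Sigma(K)$ is dense in $K$, countably closed, and Fr\'echet--Urysohn. Note first that if $K=\Sigma(K)$, then $K$ embeds into the $\Sigma$-product witnessing Valdivianess, hence $K$ is Corson. Therefore we may and do assume $K \setminus \Sigma(K)\neq \emptyset$, and our goal is to produce a weak P-point in $K$.

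The key step is to locate the candidate weak P-point using scatteredness. Since $\overline{K \setminus \Sigma(K)}$ is a non-empty closed subset of the scattered compactum $K$, it has a point $p$ isolated in $\overline{K \setminus \Sigma(K)}$; pick an open neighborhood $U$ of $p$ with $U \cap \overline{K \setminus \Sigma(K)}=\{p\}$. I first check that $p \notin \Sigma(K)$: otherwise $U \setminus \{p\} \subseteq K \setminus \overline{K \setminus \Sigma(K)} \subseteq \Sigma(K)$, so $U \subseteq \Sigma(K)$, and then $p$ would have a neighborhood disjoint from $K \setminus \Sigma(K)$, contradicting $p \in \overline{K \setminus \Sigma(K)}$. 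Moreover $p$ is not isolated in $K$, since every isolated point of $K$ belongs to $\Sigma(K)$ (any singleton open set meets the dense set $\Sigma(K)$).

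It remains to show that no countable subset of $K \setminus \{p\}$ accumulates at $p$. Suppose towards a contradiction that $A \subseteq K \setminus \{p\}$ is countable with $p \in \overline{A}$. Since $U$ is a neighborhood of $p$, we still have $p \in \overline{A \cap U}$, so we may assume $A \subseteq U$. Then $A \cap \overline{K \setminus \Sigma(K)} \subseteq (U \setminus \{p\}) \cap \overline{K \setminus \Sigma(K)}=\emptyset$, hence $A \subseteq K \setminus \overline{K \setminus \Sigma(K)} \subseteq \Sigma(K)$. But then countable closedness of $\Sigma(K)$ yields $\overline{A} \subseteq \Sigma(K)$, so $p \in \Sigma(K)$, contradicting the previous paragraph. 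Therefore $p$ is a weak P-point of $K$, completing the proof.

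The only potentially delicate point is the verification that the isolated point $p$ of $\overline{K \setminus \Sigma(K)}$ actually lies in $K \setminus \Sigma(K)$ rather than on its boundary within $\Sigma(K)$; this is what forces one to work with the closure of $K \setminus \Sigma(K)$ (which need not be closed itself) and is handled by the short topological argument above. Everything else reduces to bookkeeping with countable closedness and density of $\Sigma(K)$.
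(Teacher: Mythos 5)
Your proof is correct and follows essentially the same route as the paper: locate an isolated point $p$ of the complement of the $\Sigma$-subset via scatteredness, use density of $\Sigma(K)$ to see $p$ is not isolated in $K$, and use countable closedness of $\Sigma(K)$ to rule out countable sets accumulating at $p$. The only (harmless) difference is your detour through $\overline{K\setminus\Sigma(K)}$; the paper picks an isolated point of $K\setminus\Sigma(K)$ directly, which is legitimate since scatteredness gives isolated points in every non-empty subset, not just closed ones.
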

\begin{proof} Let $\Sigma(K)$ be a dense $\Sigma$-subspace of $K$. If $K$ is not Corson, then $K\setminus \Sigma(K)$ is not empty, so we can take a point $p$ that is isolated in $K\setminus \Sigma(K)$. Thus, there exists a neighbourhood $U$ of $p$ such that $U\setminus \{p\}\subseteq \Sigma(K)$. But $p$ is not isolated in $K$ ($\Sigma(K)$ is dense), hence, in order not to be a weak P-point, it must be an accumulation point of a countable set in $K$. Therefore, it is also an accumulation point of a countable subset of $U\setminus \{p\}\subseteq \Sigma(K)$. However, this contradicts that $\Sigma(K)$ is countably closed.
\end{proof}

To conclude the section, let us point out explicitly one consequence of Remark \ref{rmk: Kro and around}. Let $\mathscr{C}$ be a class of Valdivia compacta such that $K\in \mathscr{C}$ is Corson if and only if $K$ admits no P-point (for example, a class of Valdivia compacta in which every weak P-point is a P-point). Then Problem \ref{pb: Gilles} has a positive answer for all $K\in \mathscr{C}$. We don't have any specific class of compacta to which this comment applies (however, we will need a very similar argument in Theorem \ref{th: sol for trees}), but we decided to spell it out here for possible future reference.

\section{Compact trees and continuous functions thereon}\label{sec: trees}
In this section we specialise results from Section \ref{sec: consequences} and our object of interest is now the case where $K$ is a compact tree endowed with the coarse wedge topology. After a brief review of the necessary definitions, we show how to exploit the results from Section \ref{sec: consequences} to solve Problem \ref{pb: Gilles} in the case where $K$ is a tree. Next, we study in more detail the structure of scattered trees. We prove the almost optimal result that every scattered tree of height less than $\omega_2$ is Valdivia and we also give an order-theoretic characterisation of scattered trees. Finally, by means of an analogous characterisation of metrisable trees, we build an explicit example of a $1$-norming M-basis in $C(K)$, where $K$ is a metrisable compact tree. 

A \textit{tree} is a partially ordered set $(T,\leq)$ such that the set of predecessors $\{s\in T\colon s< t\}$ of any $t\in T$ is well-ordered by $<$. A tree $T$ is said to be \textit{rooted} if it has only one minimal element (denoted by $0_T$), called \textit{root}. For a tree $T$ and $t\in T$, we write $\hat{t}=\{s\in T\colon s\leq t\}$. A \textit{chain} in $T$ is a totally ordered subset of $T$; $T$ is called \textit{chain complete} if every chain has a supremum. For an element $t\in T$, $\htte(t,T)$ denotes the order type of $\{s\in T\colon s< t\}$. Given an ordinal $\alpha$, the set $\Lev_{\alpha}(T)=\{t\in T\colon \htte(t,T)=\alpha\}$ is called the \textit{$\alpha$-th level} of $T$. The \textit{height} of $T$, denoted by $\htte(T)$, is the least $\alpha$ such that $\Lev_{\alpha}(T)=\emptyset$. For an element $t\in T$, $\cf(t)$ denotes the cofinality of $\htte(t,T)$, where $\cf(t)=0$ if $\htte(t,T)$ is a successor ordinal or $\htte(t,T)=0$. Moreover, $\ims(t)=\{s\in T\colon t\leq s, \,\htte(s,T)=\htte(t,T)+1\}$ denotes the set of \textit{immediate successors} of $t$. We denote by $I(T)=\{t\in T\colon  \cf(t)<\omega\}$. A tree $T$ is an \textit{$r$-tree} if it satisfies the following condition
\[
|\ims(t)|<\omega \mbox{ whenever } \cf(t)\geq\omega_1.
\]

For $t\in T$ we put $V_t=\{s\in T\colon s\geq t\}$. The \textit{coarse wedge topology} on a tree $T$ is the one whose subbase is given by the sets $V_t$ and their complements, where $t\in I(T)$. Thus, a basis for the topology is formed by sets of the form $W_{t}^{F}=V_t\setminus \bigcup\{V_s\colon s\in F\},$
where $t\in I(T)$ and $F\subseteq \ims(t)$ is finite. Recall that a tree $T$ is compact Hausdorff in the coarse wedge topology if and only if $T$ is chain complete and it has finitely many minimal elements, \cite[Corollary 3.5]{N1}. For this reason, from now on we will only consider chain complete, rooted trees. Importantly, this assumption assures that every two elements $s,t\in T$ admit an infimum $s\wedge t$; indeed, $s\wedge t=\max(\hat{s}\cap\hat{t})$. We refer the reader to \cite{K20,N1,N2,RS,S18,S20} for more information on the coarse wedge topology and relation between this topology and classes of non-metrisable compacta. Let us only point out that by \cite[Theorem 2.8]{N2} a tree is Corson if and only if every chain is countable, in particular if $\htte(T)\leq\omega_1$, then $T$ is a Corson compact space. 

We begin by showing how to use the results from Section \ref{sec: consequences} to solve Problem \ref{pb: Gilles} for trees.
\begin{lemma}\label{lem: omega1 image} Let $T$ be a compact tree with $\htte(T)>\omega_1$. Then $[0,\omega_1]$ is a continuous order preserving image of $T$.
\end{lemma}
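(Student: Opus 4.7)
The plan is to produce an explicit continuous, order-preserving surjection $\p\colon T\to [0,\omega_1]$ by ``projecting'' every node of $T$ onto a fixed chain of length $\omega_1+1$.

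First, since $\htte(T)>\omega_1$ and $T$ is chain complete, the level $\Lev_{\omega_1}(T)$ is non-empty; fix $t^*\in \Lev_{\omega_1}(T)$. The set $\hat{t^*}$ is well-ordered of type $\omega_1+1$, so for every $\alpha\leq\omega_1$ there is a unique $s_\alpha\in \hat{t^*}$ with $\htte(s_\alpha,T)=\alpha$ (in particular $s_{\omega_1}=t^*$). I would then define
\[
\p(t)\coloneqq \begin{cases} \htte(t\wedge t^*,T) & \text{if } t\not\geq t^*,\\ \omega_1 & \text{if } t\geq t^*. \end{cases}
\]
Note that $t\wedge t^*\in \hat{t^*}$ always exists because $T$ is chain complete and rooted, and $\p(s_\alpha)=\alpha$ for every $\alpha\leq\omega_1$, so $\p$ is surjective. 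Order preservation follows because $t_1\leq t_2$ implies $\hat{t_1}\cap\hat{t^*}\subseteq \hat{t_2}\cap\hat{t^*}$, hence $t_1\wedge t^*\leq t_2\wedge t^*$ in the well-ordered chain $\hat{t^*}$, and the case $t_1\geq t^*$ is immediate.

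The bulk of the argument is continuity, which I would verify by showing that $\p$-preimages of the subbasic closed sets $[0,\alpha]$ and $[\beta,\omega_1]$ of $[0,\omega_1]$ are closed in $T$. For a successor $\alpha+1\leq\omega_1$, the element $s_{\alpha+1}$ has successor height, so $s_{\alpha+1}\in I(T)$ and both $V_{s_{\alpha+1}}$ and its complement are subbasic open. A direct check (using that $t\wedge t^*\leq s_\alpha$ iff $s_{\alpha+1}\not\leq t$) shows
\[
\p^{-1}([0,\alpha])= T\setminus V_{s_{\alpha+1}} \quad\text{and}\quad \p^{-1}([\alpha+1,\omega_1])= V_{s_{\alpha+1}},
\]
which are therefore clopen. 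For the remaining preimage $\p^{-1}(\{\omega_1\})=V_{t^*}$, and more generally $V_{s_\beta}$ with $\beta$ of countable cofinality, I would observe that
\[
V_{s_\beta}= \bigcap_{\gamma<\beta} V_{s_{\gamma+1}},
\]
since $\hat{t}$ is downward closed and well-ordered, whence containing every $s_{\gamma+1}$ for $\gamma<\beta$ forces $s_\beta\in\hat{t}$. Each $V_{s_{\gamma+1}}$ is clopen by the previous step, so $V_{s_\beta}$ is a closed intersection.

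The only real obstacle is the last point, i.e.\ making sure that $V_{s_\beta}$ is closed even when $s_\beta\notin I(T)$; the intersection identity above handles it cleanly. Everything else (existence of $t^*$, existence of $t\wedge t^*$, and surjectivity) is immediate from the chain completeness and rootedness built into the standing hypotheses on compact trees.
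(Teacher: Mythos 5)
Your proof is correct and is essentially the paper's argument: the paper also fixes $t\in\Lev_{\omega_1}(T)$ and uses the retraction $s\mapsto s\wedge t$ onto $\hat{t}\cong[0,\omega_1]$, merely asserting continuity where you verify it via preimages of subbasic sets. The extra details (the identities $\p^{-1}([0,\alpha])=T\setminus V_{s_{\alpha+1}}$ and $V_{s_\beta}=\bigcap_{\gamma<\beta}V_{s_{\gamma+1}}$) are accurate and just make explicit what the paper leaves to the reader.
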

\begin{proof}It is enough to take any $t\in \Lev_{\omega_1}(T)$, consider the continuous function $f\colon T\to \hat{t}$ defined by $f(s)\coloneqq s \wedge t$, and recall that, by definition, $\hat{t}$ is homeomorphic to $[0,\omega_1]$.
\end{proof}

Combining the above lemma with Proposition \ref{prop: cont image}, we immediately arrive at the following corollary. 
\begin{corollary}\label{cor: no norming in tall trees} If $T$ is a compact tree with $\htte(T)>\omega_1$, then $C(T)$ does not embed in a space with norming M-basis.
\end{corollary}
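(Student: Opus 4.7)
The plan is to simply concatenate the two results immediately preceding the statement. By hypothesis $\htte(T)>\omega_1$, so Lemma \ref{lem: omega1 image} applies and produces a continuous surjection from $T$ onto $[0,\omega_1]$ (in fact, an order preserving one, but only continuity and surjectivity matter here). Consequently, $[0,\omega_1]$ is a continuous image of the compact space $T$, which is precisely the hypothesis of Proposition \ref{prop: cont image}. That proposition then yields at once that $C(T)$ admits no isomorphic embedding into a Banach space with a norming M-basis, which is the conclusion we want.

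There is essentially no obstacle: the substantial content has already been established in Theorem \ref{mth: AP} (used through Proposition \ref{prop: cont image}) and in the elementary tree-theoretic Lemma \ref{lem: omega1 image}. The only thing to verify, if one wishes to be pedantic, is that the composition $f \mapsto f \circ \p$ (with $\p\colon T\to [0,\omega_1]$ the surjection provided by Lemma \ref{lem: omega1 image}) is an isometric embedding of $C([0,\omega_1])$ into $C(T)$; but this is exactly the content of the short proof of Proposition \ref{prop: cont image}. Thus the corollary reduces to a one-line invocation of Lemma \ref{lem: omega1 image} followed by Proposition \ref{prop: cont image}.
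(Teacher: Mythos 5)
Your proposal is correct and is exactly the paper's argument: the authors state that the corollary follows immediately by combining Lemma \ref{lem: omega1 image} with Proposition \ref{prop: cont image}, which is precisely your chain of reasoning.
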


In turn, this corollary allows us to answer Problem \ref{pb: Gilles} when $K$ is a tree.
\begin{theorem}\label{th: sol for trees} Let $T$ be a scattered compact tree such that $C(T)$ has a norming M-basis. Then $T$ is Eberlein.
\end{theorem}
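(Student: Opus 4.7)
The proof is a short assembly of results already stated in the paper. The plan is to split into cases according to the height of $T$, rule out the tall case using Corollary \ref{cor: no norming in tall trees}, and then conclude in the short case via the Nyikos characterisation of Corson trees combined with Alster's theorem.

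First I would observe that $C(T)$ embeds in itself, so by assumption $C(T)$ does embed in a Banach space with a norming M-basis. By Corollary \ref{cor: no norming in tall trees}, this rules out the possibility that $\htte(T)>\omega_1$. Hence $\htte(T)\leq \omega_1$, which means that every chain in $T$ is countable (each chain is contained in $\hat t$ for some $t$, whose order type is $\htte(t,T)<\omega_1$).

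Next, invoking the cited result \cite[Theorem 2.8]{N2}, a compact tree all of whose chains are countable is a Corson compactum; so $T$ is Corson. Finally, since $T$ is simultaneously scattered and Corson, Alster's theorem (recalled in Section \ref{sec: prelim}) gives that $T$ is Eberlein, which is the desired conclusion.

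There is no real obstacle here: the theorem is essentially a packaging of Corollary \ref{cor: no norming in tall trees} with the known fact that scattered Corson trees are Eberlein. The substance of the argument lies entirely upstream, in Theorem \ref{mth: AP} and in the reduction from $\htte(T)>\omega_1$ to the existence of a continuous surjection onto $[0,\omega_1]$ performed in Lemma \ref{lem: omega1 image}. In particular, note that since $C(T)$ is Asplund (as $T$ is scattered) and WCG (as $T$ is Eberlein), one in fact obtains the stronger conclusion that $C(T)$ admits a shrinking M-basis, as mentioned in the introduction.
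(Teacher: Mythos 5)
Your proof is correct and follows exactly the same route as the paper: use Corollary \ref{cor: no norming in tall trees} to force $\htte(T)\leq\omega_1$, deduce that $T$ is Corson via \cite[Theorem 2.8]{N2}, and conclude with Alster's theorem that scattered Corson compacta are Eberlein. The closing observation about the shrinking M-basis is a correct bonus already noted in the paper's introduction.
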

\begin{proof} Our assumptions and Corollary \ref{cor: no norming in tall trees} imply that $\htte(T)\leq \omega_1$, from which it is elementary to conclude that $T$ is Corson, \cite[Theorem 2.8]{N2}. Finally, Alster's result that scattered Corson compacta are Eberlein \cite{Alster} concludes the proof.
\end{proof}

Notice that one step of the previous proof consisted in deriving in particular that $T$ must be Valdivia whenever $T$ is scattered and $C(T)$ has a norming M-basis. We now proceed to prove (in Corollary \ref{cor: scattered tree is Valdivia} below) a more general result without the assumption on the norming M-basis, under an additional assumption on the height of $T$.

Following \cite{K20, S20}, we say that a subset $A$ of a topological space $X$ is \textit{$\omega_1$-relatively discrete} if it can be written as a union of $\omega_1$-many  discrete subsets of $X$. A simple argument shows that $B$ is $\omega_1$-relatively discrete whenever $B\subseteq A$ and $A$ is $\omega_1$-relatively discrete. Moreover, if sets $(A_\alpha)_{\alpha< \omega_1}$ are $\omega_1$-relatively discrete, then the same holds for $\bigcup_{\alpha< \omega_1} A_\alpha$. As a particular case, if $A$ is $\omega_1$-relatively discrete and $|B|\leq\omega_1$, then $A\cup B$ is $\omega_1$-relatively discrete. For possibly larger unions we have the following fact.

\begin{fact}\label{fact: unionofomega1reldiscrete} Let $X$ be a topological space and $\{A_\alpha\}_{\alpha\in \Gamma}$ be a family of pairwise disjoint $\omega_1$-relatively discrete open subsets, for some set $\Gamma$. Then $A=\bigcup_{\alpha\in\Gamma}A_\alpha$ is $\omega_1$-relatively discrete. 
\end{fact}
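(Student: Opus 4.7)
The plan is to exploit the openness of the $A_\alpha$ and their pairwise disjointness to transfer discreteness of the pieces inside each $A_\alpha$ to discreteness of their union over $\Gamma$. Concretely, for each $\alpha \in \Gamma$, I would fix a decomposition $A_\alpha = \bigcup_{\beta<\omega_1} D_\alpha^\beta$, where each $D_\alpha^\beta$ is discrete in $X$. Then, for each fixed $\beta<\omega_1$, I would set $D^\beta \coloneqq \bigcup_{\alpha \in \Gamma} D_\alpha^\beta$ and aim to show that $D^\beta$ is discrete in $X$. Once this is proved, we would have $A = \bigcup_{\beta<\omega_1} D^\beta$, which witnesses that $A$ is $\omega_1$-relatively discrete.

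The verification that $D^\beta$ is discrete is the only real content, and it is straightforward. Fix $x\in D^\beta$; by construction $x \in D_\alpha^\beta$ for exactly one $\alpha \in \Gamma$ (because the $A_\alpha$ are pairwise disjoint). Since $D_\alpha^\beta$ is discrete in $X$, there exists an open set $U \subseteq X$ with $U\cap D_\alpha^\beta = \{x\}$. Then $V \coloneqq U\cap A_\alpha$ is open in $X$ (here we critically use that $A_\alpha$ itself is open), contains $x$, and satisfies $V\cap D_{\alpha'}^\beta = \emptyset$ for every $\alpha'\ne \alpha$ because $V \subseteq A_\alpha$ and $A_\alpha \cap A_{\alpha'}=\emptyset$. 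Consequently $V\cap D^\beta = V\cap D_\alpha^\beta \subseteq U\cap D_\alpha^\beta = \{x\}$, and $D^\beta$ is discrete as desired.

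There is no genuine obstacle here; the only subtlety worth flagging is that one must invoke both the openness of $A_\alpha$ (to ensure $V$ is open in $X$, not merely in $A_\alpha$) and the pairwise disjointness (to exclude contributions from $D_{\alpha'}^\beta$ with $\alpha'\ne \alpha$). Neither hypothesis can be dropped; without openness $V$ might fail to witness discreteness in $X$, and without disjointness the cross-terms $V\cap D_{\alpha'}^\beta$ need not vanish. The argument does not use the cardinality bound $\omega_1$ in any essential way beyond the fact that the index set of the discrete decomposition is preserved under the reindexing $\bigcup_\alpha \bigcup_\beta = \bigcup_\beta \bigcup_\alpha$.
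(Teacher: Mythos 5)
Your proof is correct and follows essentially the same route as the paper: decompose each $A_\alpha$ into $\omega_1$-many discrete pieces, reindex to form $D^\beta=\bigcup_\alpha D_\alpha^\beta$, and use the openness and pairwise disjointness of the $A_\alpha$ to check each $D^\beta$ is discrete. The paper phrases the last step slightly more compactly (each $D_\alpha^\beta=A_\alpha\cap D^\beta$ is open in $D^\beta$, hence $D^\beta$ is locally discrete and thus discrete), but the content is identical.
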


\begin{proof} For every $\alpha\in \Gamma$ there exists a family $\{A_{\alpha,\xi}\}_{\xi<\omega_1}$ of discrete subsets of $A_\alpha$ such that $A_\alpha= \bigcup_{\xi< \omega_1} A_{\alpha,\xi}$. We define $B_{\xi}\coloneqq \bigcup_{\alpha\in \Gamma}A_{\alpha,\xi}$. Since the $A_\alpha$'s are pairwise disjoint and open, $A_{\alpha,\xi}= A_\alpha\cap B_\xi$ is open in $B_\xi$. Therefore, $B_\xi$ is discrete and we get that $A$ is $\omega_1$-relatively discrete.
\end{proof}

The proof of Corollary \ref{cor: scattered tree is Valdivia} will be a consequence of Theorem \ref{t: omega1discrete}, asserting that scattered trees of height at most $\omega_2$ are $\omega_1$-relatively discrete. The following lemma distils the crucial ingredient that we need.

\begin{lemma}\label{lem: S and discrete lev} Let $T$ be a scattered compact tree such that $\htte(T)\leq \omega_2$. Suppose that $T$ satisfies the following condition:
\begin{center} for every closed subset $S$ of $T$ that is $\wedge$-closed and such that $\rk(S)<\rk(T)$,\\ $S$ is $\omega_1$-relatively discrete in the subspace topology.
\end{center}
Then $T$ is $\omega_1$-relatively discrete.
\end{lemma}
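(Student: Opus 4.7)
The overall strategy is to decompose $T$ into the finite top Cantor--Bendixson derivative $F:=T^{(\beta)}$ (where $\beta+1=\rk(T)$) plus a disjoint family of closed $\wedge$-closed subtrees of $T$ of the form $V_v$, each of which avoids $F$. Monotonicity of the Cantor--Bendixson derivative under passage to closed subsets yields $V_v^{(\beta)}\subseteq T^{(\beta)}\cap V_v=\emptyset$ for every closed $V_v$ disjoint from $F$, so each such $V_v$ has rank strictly less than $\rk(T)$ and is $\omega_1$-relatively discrete by the standing hypothesis. A union of all these $V_v$'s is $\omega_1$-relatively discrete by Fact~\ref{fact: unionofomega1reldiscrete}, and the countable (or, in the worst case, $\omega_1$-sized) set of leftover auxiliary points is $\omega_1$-relatively discrete trivially as a union of singletons, so the property is preserved.

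Concretely, using zero-dimensionality of $T$ and that $F=\{t_1,\ldots, t_n\}$ is finite, I would first choose pairwise disjoint clopen $\wedge$-closed neighborhoods $U_i$ of the $t_i$'s with $U_i\cap F=\{t_i\}$ (exploiting that basic coarse wedge sets $V_s$ and $W_s^G=V_s\setminus\bigcup_{u\in G}V_u$ are clopen and $\wedge$-closed in $T$), and handle the clopen residual $U_0:=T\setminus\bigcup_i U_i$ separately. For each $U_i$, the plan is to decompose $U_i\setminus\{t_i\}$ as a disjoint union of closed $\wedge$-closed subtrees of $T$: if $t_i\in I(T)$, one directly gets $U_i\setminus\{t_i\}$ as a disjoint union of subtrees of the form $V_v$ for $v\in \ims(t_i)$, possibly intersected with $U_i$. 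If $\cf(t_i)\geq \omega$, one picks a strictly increasing chain $(s_\xi)_{\xi<\cf(t_i)}$ in $I(T)\cap \hat{t_i}$ cofinal in $t_i$, lets $v_\xi\in \ims(s_\xi)$ denote the unique immediate successor of $s_\xi$ below $t_i$, and obtains a $\cf(t_i)$-indexed family of ``annular'' pieces, each decomposing into subtrees $V_v$ for $v\in \ims(s_\xi)\setminus \{v_\xi\}$, plus the auxiliary collection $\{s_\xi,v_\xi\colon \xi<\cf(t_i)\}$ of cardinality at most $\omega_1$.

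The hard part will be the bookkeeping in the case $\cf(t_i)=\omega_1$: the chain $(s_\xi)$ has length $\omega_1$, and at each limit stage $\xi$ one must check, using chain completeness of $T$, that $\sup_{\eta<\xi}s_\eta$ is itself an element of $\hat{t_i}\setminus\{t_i\}$ and that the peeled-off family up to stage $\xi$ accounts for the intersection $\bigcap_{\eta<\xi}V_{s_\eta}$ without leaving gaps. Once this is in place, applying the standing hypothesis to each $V_v$ and combining via Fact~\ref{fact: unionofomega1reldiscrete}, together with the singleton $\{t_i\}$ and the auxiliary collection, yields that each $U_i$ is $\omega_1$-relatively discrete. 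The clopen complement $U_0$ is handled analogously: being disjoint from $F$, it decomposes into closed $\wedge$-closed subtrees of rank $<\rk(T)$, each $\omega_1$-relatively discrete by the hypothesis.
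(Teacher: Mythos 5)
Your overall strategy coincides with the paper's: isolate the finite top derivative $T^{(\gamma)}$, cover the rest of $T$ by pairwise disjoint clopen $\wedge$-closed pieces of rank strictly less than $\rk(T)$ plus leftover chains of size at most $\omega_1$, and combine via Fact~\ref{fact: unionofomega1reldiscrete}. There is, however, a concrete gap in your decomposition of $U_i\setminus\{t_i\}$ when $\cf(t_i)\geq\omega$. You peel off only the subtrees $V_v$ with $v\in\ims(s_\xi)\setminus\{v_\xi\}$ for the members $s_\xi$ of a chain that is merely \emph{cofinal} below $t_i$. Whenever $s_{\xi+1}$ is not the immediate successor $v_\xi$ of $s_\xi$ --- which is unavoidable at limit stages, since there $s_\xi$ must sit strictly above $\sup_{\eta<\xi}s_\eta$ to lie in $I(T)$ --- the entire region $V_{v_\xi}\setminus V_{s_{\xi+1}}$, and in particular every subtree hanging off a predecessor of $t_i$ strictly between $v_\xi$ and $s_{\xi+1}$, is covered neither by your $V_v$'s nor by your auxiliary set $\{s_\xi,v_\xi\colon\xi<\cf(t_i)\}$. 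The fix (and what the paper does) is to run over \emph{all} predecessors of $t_i$ in $V_{s_i}$: for every $\eta<\htte(t_i,V_{s_i})$ let $s_\eta$ be the predecessor of $t_i$ at level $\eta$ and peel off $V_r$ for every $r\in\ims(s_\eta)\setminus\{s_{\eta+1}\}$. The leftover is then exactly the chain $\hat{t_i}\cap V_{s_i}$, and it is precisely here that the hypothesis $\htte(T)\leq\omega_2$ enters, guaranteeing $|\hat{t_i}|\leq\omega_1$.

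A second, smaller point: the residual $U_0=T\setminus\bigcup_i U_i$ need not be $\wedge$-closed when some $U_i=W_{s_i}^{F_i}$ has $|F_i|\geq 2$ (two points lying above distinct members of $F_i$ have infimum $t_i\in U_i$), so the standing hypothesis cannot be applied to $U_0$ as a whole. Its further decomposition must also cope with the fact that a single $V_u$, $u\in F_i$, may contain several other points of $T^{(\gamma)}$; this is exactly the bookkeeping the paper performs with the sets $W_r^{G_r}$, where $G_r=V_r\cap\{s_0(p)\}_{p\in T^{(\gamma)}}$. Your ``handled analogously'' conceals this step, but it does go through once spelled out. With these two repairs your argument becomes essentially identical to the paper's proof.
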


\begin{proof} The rough idea of the argument is to split the tree $T$ into many clopen, $\wedge$-closed pieces with rank smaller than $\rk(T)$ and a remainder of cardinality at most $\omega_1$. Then Fact \ref{fact: unionofomega1reldiscrete} allows to conclude that $T$ itself is $\omega_1$-relatively discrete.

Since $T$ is compact, the Cantor-Bendixson rank of $T$ is a successor ordinal, therefore let us write $\rk(T)=\gamma+1$, for some ordinal $\gamma$. Hence, $T^{(\gamma)}$ is a finite subset of $T$; thus for each $t\in T^{(\gamma)}$ there are $s_0(t)\in I(T)$ and $F_t\subseteq \ims(t)$ such that the sets $W_{s_0(t)}^{F_t}$ are pairwise disjoint; in particular, $W_{s_0(t)}^{F_t}\cap T^{(\gamma)}=\{t\}$. Let us define the clopen set
\begin{equation*}
    S\coloneqq T\setminus \bigcup_{t\in T^{(\gamma)}}V_{s_0(t)}, 
\end{equation*}
which is $\wedge$-closed in $T$. Moreover, since $S\cap T^{(\gamma)}=\emptyset$, it follows that $\rk(S)<\rk(T)$. Hence, our assumption on $T$ yields us that $S$ is $\omega_1$-relatively discrete. 

Next, we have to take care of each $V_{s_0(t)}$; part of the difficulty here is that, while the $W_{s_0(t)}^{F_t}$ are disjoint, $V_{s_0(t)}$ might contain more than one point of $T^{(\gamma)}$. We start by handling the mutually disjoint sets $V_{s_0(t)} \setminus V_t$ (note that it might happen that $s_0(t)=t$, in which case the set is just empty). Thus fix $t\in T^{(\gamma)}$ and consider the tree $V_{s_0(t)}$. For every $\eta< \htte(t,V_{s_0(t)})$ we define $s_\eta(t)$ as the unique predecessor of $t$ that belongs to $\Lev_{\eta}(V_{s_{0}(t)})$. Then, for every $r\in I_\eta\coloneqq \ims(s_\eta(t))\setminus \{s_{\eta+1}(t)\}$ it holds that $V_r\cap T^{(\gamma)}=\emptyset$ (indeed, $V_r\subseteq V_{s_0(t)} \setminus V_t \subseteq W_{s_0(t)}^{F_t}$ and $t\notin V_r$).
Therefore, the Cantor-Bendixson rank of $V_r$ is smaller than $\rk(T)$. Hence, by the hypothesis $V_r$ is $\omega_1$-relatively discrete. Moreover, we have the disjoint union
$$\left(V_{s_0(t)} \setminus V_t\right) \cup\{t\} = (\hat{t}\cap V_{s_0(t)}) \cup \bigcup_{\eta< \htte(t, V_{s_0(t)})} \bigcup_{r\in I_\eta} V_r.$$

Hence, the last part of $T$ we have to take care of are the sets $V_t\setminus \{t \}$. Thus we fix $t\in T^{(\gamma)}$ and, for each $r\in \ims(t)$, we define $G_r\coloneqq V_r\cap \{s_0(p)\}_{p\in T^{(\gamma)}}$. Then, since $G_r$ is finite, $W_r^{G_r}$ is a clopen, $\wedge$-closed subset of $T$, which by definition does not intersect $T^{(\gamma)}$. Once more, our assumption on $T$ implies that $W_{r}^{G_r}$ is $\omega_1$-relatively discrete. Also, we clearly have
$$\left( V_t \setminus \{t\}\right) \setminus \bigcup_{t\neq p\in T^{(\gamma)}} V_{s_0(p)} = \bigcup_{r\in \ims(t)} W_r^{G_r}.$$

Finally, the previous considerations imply that $T$ can be written as follows
\begin{equation*}
    T= S  \cup \bigcup_{t\in T^{(\gamma)}}\bigcup_{r\in \ims(t)} W_r^{G_r} \cup \bigcup_{t\in T^{(\gamma)}}\bigcup_{\eta<\htte(t, V_{s_0(t)})}\bigcup_{r\in I_\eta} V_r \cup \bigcup_{t\in T^{(\gamma)}}(\hat{t}\cap V_{s_0(t)}),
\end{equation*}
where the above is a disjoint union. Moreover, we already saw that each clopen set $S$, $W_r^{G_r}$, and $V_r$ is $\omega_1$-relatively discrete. Since $\htte(T)\leq \omega_2$, for every $t\in T$ we have $\htte(t,T)<\omega_2$; therefore $\hat{t}$ has cardinality at most $\omega_1$. As $T^{(\gamma)}$ is finite, the subset $\bigcup_{t\in T^{(\gamma)}}(\hat{t}\cap V_{s_0(t)})$ has size $\omega_1$ as well. Therefore, Fact \ref{fact: unionofomega1reldiscrete} yields the assertion.
\end{proof}

\begin{theorem}\label{t: omega1discrete} Let $T$ be a scattered compact tree such that $\htte(T)\leq\omega_2$. Then $T$ is $\omega_1$-relatively discrete.
\end{theorem}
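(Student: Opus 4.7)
The plan is to prove the theorem by transfinite induction on $\rk(T)$, applying Lemma~\ref{lem: S and discrete lev} at the inductive step. Since a standard fact recalled in Section~\ref{sec: prelim} gives that $\rk(T)$ is always a successor ordinal, only successor steps need treatment. The base case $\rk(T)=1$ is immediate: $T$ is discrete, hence $\omega_1$-relatively discrete.

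For the inductive step, suppose $\rk(T)=\gamma+1$ and that the theorem holds for every scattered compact tree with height at most $\omega_2$ and Cantor--Bendixson rank at most $\gamma$. I would apply Lemma~\ref{lem: S and discrete lev} to $T$, whose hypothesis demands that every closed $\wedge$-closed $S\subseteq T$ with $\rk(S)<\rk(T)$ be $\omega_1$-relatively discrete in the subspace topology. The key point is that such an $S$, with the inherited partial order, is itself a scattered compact tree with $\htte(S)\leq\omega_2$ and whose coarse wedge topology coincides with the subspace topology. Three facts are needed: $S$ is chain complete (any increasing chain in $S$ has its supremum in $T$, and this supremum lies in $S$ because chains converge to their suprema in the coarse wedge topology and $S$ is closed); $S$ has a unique minimum (by $\wedge$-closedness, the decreasing net of finite infima of elements of $S$ lies in $S$ and converges to $\inf S$, forcing $\inf S \in S$); and the subspace topology on $S$ equals its coarse wedge topology.

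The last of these is the main obstacle. Both topologies are compact Hausdorff on $S$, so it suffices to show that one refines the other; I would show that the subspace topology refines the coarse wedge topology. The delicate case is a $t\in I(S)$ that is not in $I(T)$: then the basic coarse wedge open $V_t^S=V_t\cap S$ need not be visibly open in the subspace topology from $T$. Let $t'\in S$ be the unique immediate $S$-predecessor of $t$ (the case $t=\min S$ being trivial, since then $V_t^S=S$), and let $r$ be the immediate successor in $T$ of $t'$ on the chain from $t'$ to $t$. Then $r\in I(T)$, and the absence of elements of $S$ strictly between $t'$ and $t$, together with $\wedge$-closedness (which excludes $S$-branches above $r$ diverging from $t$, since any such would produce a meet in $S \cap (t',t)$), forces $V_r\cap S=V_t\cap S$; this realises $V_t^S$ as a subspace open set. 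Given these three facts, $S$ falls under the induction hypothesis and is $\omega_1$-relatively discrete, verifying the hypothesis of Lemma~\ref{lem: S and discrete lev} and completing the induction.
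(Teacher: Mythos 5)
Your proposal is correct and follows essentially the same route as the paper: both arguments reduce to Lemma~\ref{lem: S and discrete lev} by an induction on the Cantor--Bendixson rank, the key point being that a closed, $\wedge$-closed subset $S$ is again a compact tree whose coarse wedge topology is the subspace topology. The only differences are presentational: the paper uses a minimal-counterexample formulation (minimising first the height, then the rank, though your single induction on rank suffices since the bound $\htte(S)\leq\htte(T)\leq\omega_2$ is automatic), and it simply cites \cite[Lemma 2.1]{S20} for the coincidence of topologies, which you instead verify directly (correctly, via the identification $V_t\cap S=V_r\cap S$).
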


\begin{proof} Assume by contradiction that the result fails and let $\beta_0\leq \omega_2$ be the minimal height of a counterexample. Next, consider all scattered trees of height $\beta_0$ which fail to be $\omega_1$-relatively discrete and let $\alpha_0$ be the minimal value of $\rk(T)$ among all such trees $T$ ($\alpha_0$ is well defined since $T$ is scattered); finally, fix a scattered tree $T$ such that $\htte(T)= \beta_0$, $\rk(T)= \alpha_0$, and $T$ is not $\omega_1$-relatively discrete.

We shall now check that $T$ satisfies the condition of Lemma \ref{lem: S and discrete lev}, so we take a closed subset $S$ of $T$ which is $\wedge$-closed and such that $\rk(S)<\alpha_0$. Since $S$ is both closed and $\wedge$-closed, the subspace topology on $S$ is the coarse wedge topology of the tree $S$ (with the induced order), see \cite[Lemma 2.1]{S20}. Hence, $S$ is a scattered tree such that $\htte(S)\leq \beta_0$ and $\rk(S)< \alpha_0$. By minimality of $\beta_0$ and $\alpha_0$, it follows that $S$ is not a counterexample, thus $S$ is $\omega_1$-relatively discrete. Therefore, we can apply Lemma \ref{lem: S and discrete lev}, which gives us the contradiction that $T$ is $\omega_1$-relatively discrete and concludes the proof.
\end{proof}

Note that the above result cannot be reversed in ZFC. Indeed, assuming the continuum hypothesis, the dyadic tree of height $\omega+1$ has size $\omega_1$, thus it is $\omega_1$-relatively discrete. On the other hand, it contains a perfect subset, therefore it is not scattered.

For an \textit{r}-tree $T$ that satisfies $\htte(T)<\omega_2$, we define $R=\{t\in T\colon \cf(t)=\omega_1,\ \ims(t)\neq \emptyset\}$. By \cite[Theorem 3.2 (2)]{S20}, if $R\cap \Lev_{\alpha}(T)$ is $\omega_1$-relatively discrete for each $\alpha<\omega_2$ with $\cf(\alpha)=\omega_1$, then $T$ is Valdivia. Therefore Theorem \ref{t: omega1discrete} implies the following result.

\begin{corollary}\label{cor: scattered tree is Valdivia} Every scattered compact r-tree $T$ with $\htte(T)<\omega_2$ is Valdivia.
\end{corollary}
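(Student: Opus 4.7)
The plan is to directly combine Theorem \ref{t: omega1discrete} with the cited characterisation \cite[Theorem 3.2 (2)]{S20}: the former gives that the entire tree $T$ is $\omega_1$-relatively discrete, and the latter reduces being Valdivia to a statement about the sets $R\cap \Lev_\alpha(T)$ with $\cf(\alpha)=\omega_1$ being $\omega_1$-relatively discrete. Hence almost nothing remains to do beyond checking that hereditariness of the $\omega_1$-relatively discrete property transfers the conclusion of Theorem \ref{t: omega1discrete} to these particular subsets.

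First I would note that $T$ is a scattered compact tree with $\htte(T)<\omega_2\leq \omega_2$, so Theorem \ref{t: omega1discrete} applies and yields that $T$ itself is $\omega_1$-relatively discrete, i.e. can be written as a union of $\omega_1$-many discrete subsets of $T$. Since being $\omega_1$-relatively discrete passes to arbitrary subsets (a point explicitly observed in the paragraph preceding Fact \ref{fact: unionofomega1reldiscrete}), for every ordinal $\alpha<\omega_2$ with $\cf(\alpha)=\omega_1$ the set $R\cap \Lev_\alpha(T)\subseteq T$ is also $\omega_1$-relatively discrete.

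Finally, since $T$ is an $r$-tree with $\htte(T)<\omega_2$ satisfying exactly the hypothesis of \cite[Theorem 3.2 (2)]{S20}, that result immediately gives that $T$ is Valdivia, completing the proof.

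I do not expect any genuine obstacle here, since all of the real work has already been done in Theorem \ref{t: omega1discrete} (which packages the delicate Cantor--Bendixson induction via Lemma \ref{lem: S and discrete lev}) and in the external reference \cite[Theorem 3.2 (2)]{S20}. The only conceptual point to verify is the trivial monotonicity of $\omega_1$-relative discreteness under taking subsets, which has been recorded in the preamble to Fact \ref{fact: unionofomega1reldiscrete}; no further calculation is required.
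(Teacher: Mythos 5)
Your proposal is correct and matches the paper's argument exactly: the paper likewise deduces the corollary by applying Theorem \ref{t: omega1discrete} to get that all of $T$ is $\omega_1$-relatively discrete, invoking the hereditariness of this property (recorded before Fact \ref{fact: unionofomega1reldiscrete}) for the sets $R\cap\Lev_\alpha(T)$, and then citing \cite[Theorem 3.2 (2)]{S20}. Nothing is missing.
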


\begin{remark} When $\htte(T)>\omega_2$ the above corollary is clearly false, as it is simply witnessed by $[0,\omega_2]$ (and actually every Valdivia tree has height at most $\omega_2$, \cite[Theorem 3.2(1)]{S20}). On the other hand, it is conceivable that Corollary \ref{cor: scattered tree is Valdivia} is also valid for trees $T$ with $\htte(T)=\omega_2$ (especially because Theorem \ref{t: omega1discrete} is valid when $\htte(T)=\omega_2$); however, it is open if \cite[Theorem 3.2(2)]{S20} is also true when $\htte(T)=\omega_2$. 
\end{remark}

\begin{problem} Let $T$ be a scattered, compact $r$-tree with $\htte(T)=\omega_2$. Must $T$ be Valdivia?
\end{problem}

In the light of the previous results, it is natural to wonder when compact trees are scattered. Therefore, we now proceed to give a characterisation of scattered trees. Given an ordinal $\alpha$ we denote by $L(\alpha)$ the set of limit ordinals less than $\alpha$.
\begin{proposition}\label{prop: scattered iff}
    Let $T$ be a compact tree. The following are equivalent:
\begin{enumerate}
    \item\label{i: scattered} $T$ is scattered.
    \item\label{i: strong condition} For every $A\subseteq \bigcup_{\alpha\in L(\htte(T))} \Lev_{\alpha}(T)$, there exist $t\in A$, $s\in I(T)$ with $s<t$, and a finite set $F\subseteq \ims(t)$ such that $W_s^{F}\cap A=\{t\}$.
\end{enumerate}\end{proposition}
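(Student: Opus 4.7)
The plan is to prove (1)$\Leftrightarrow$(2) by arguing the two implications separately, with the common engine being that for any $t$ at a limit level of $T$, the chain $\hat t$ equipped with the subspace topology from $T$ is homeomorphic to the ordinal interval $[0,\htte(t)]$, which is scattered.

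For the central direction (2)$\Rightarrow$(1), I would argue by contradiction. Assuming $T$ is not scattered, fix a nonempty perfect (closed and dense-in-itself) subset $P \subseteq T$. First I would show that $P$ meets some limit level of $T$: starting from any $p_0 \in P$, I iteratively use that $V_{p_n}$ is open whenever $p_n \in I(T)$, together with the absence of isolated points in $P$, to produce a strictly increasing chain in $P$; its supremum lies in $P$ by chain completeness and closedness, and sits at a limit level. Thus $A \coloneqq P \cap \bigcup_{\alpha \in L(\htte(T))}\Lev_{\alpha}(T)$ is nonempty, and I apply (2) to obtain $t \in A$, $s \in I(T)$ with $s<t$, and finite $F \subseteq \ims(t)$ with $W_s^F \cap A = \{t\}$. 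The crux is to show $W_s^F \cap P \subseteq \hat t$. Since every element of $F$ lies at the successor level $\htte(t)+1$, $W_s^F$ is clopen, so $W_s^F \cap P$ is clopen in $P$ and hence perfect. For any $p \in W_s^F \cap P$ with $p \neq t$, I iterate the same chain-extension inside $W_s^F \cap P$: every non-$t$ element of $W_s^F \cap P$ is at a non-limit level (otherwise it would lie in $A \cap W_s^F = \{t\}$), so $V_{p_n}$ remains open at each step. Either the chain reaches $t$ in finitely many steps, or its supremum sits at a limit level, lies in $A \cap W_s^F$, and therefore equals $t$; in either case $p \leq t$. Consequently $W_s^F \cap P$ is a nonempty perfect subset of the scattered $\hat t$, a contradiction.

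For (1)$\Rightarrow$(2), scatteredness of $T$ gives an isolated point $t$ of $A$ in the subspace topology, witnessed by some subbase-intersection neighbourhood $V_{t_0} \setminus \bigcup_j V_{s_j}$ (with $t_0, s_j \in I(T)$ and $s_j \not\leq t$). I would then refine this into the prescribed form $W_s^F$ with $F \subseteq \ims(t)$ by partitioning the $s_j$'s into those above $t$, for which I include the corresponding predecessor at level $\htte(t)+1$ in $F$, and those incomparable to $t$, for which I raise $s$ along the chain to $t$ above all the finitely many joining levels $\htte(s_j \wedge t)$---feasible because $\htte(t)$ is a limit and so the chain to $t$ contains successor-level elements arbitrarily close to $t$. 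A short case analysis based on how $a \in A \setminus \{t\}$ relates to $t$ in the tree order then verifies $W_s^F \cap A = \{t\}$. The main obstacle is the inclusion $W_s^F \cap P \subseteq \hat t$ in direction (2)$\Rightarrow$(1): it depends delicately on three ingredients acting together---$W_s^F$ being clopen (so that chain suprema remain inside), perfectness of $P$ (so that chains keep extending), and the defining property of $A$ (forcing any limit-level supremum to equal $t$)---after which the contradiction with scatteredness of $\hat t$ closes the argument.
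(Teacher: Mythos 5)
Your argument is correct and is essentially the paper's proof in contrapositive form: both directions rest on the same two ingredients, namely that the sets $W_s^F$ with $s\in I(T)$, $s<t$, and finite $F\subseteq\ims(t)$ form a neighbourhood base at a limit-level point $t$, and that condition (2), applied to the trace of the bad set on the limit levels, forces $W_s^F\cap P$ into the scattered chain $\hat{t}$ via an increasing-chain/supremum argument. The paper runs this last step with a maximal chain and a finite/infinite dichotomy (producing an isolated point of the closed set directly) rather than your $\omega$-chains extracted from perfectness, but the substance is identical.
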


\begin{proof} Suppose that \eqref{i: scattered} holds and take a subset $A$ of $\bigcup_{\alpha\in L(\htte(T))} \Lev_{\alpha}(T)$. Since $T$ is scattered, the subset $A$ contains an isolated point $t\in A$. Hence there exist $s\in I(T)$ with $s<t$ and $F\subseteq \ims(t)$ such that $W_s^F\cap A=\{t\}$.

Conversely, assume \eqref{i: strong condition} and fix a closed subset $K$ of $T$. Let $A\coloneqq K\cap \bigcup_{\alpha\in L(\htte(T))} \Lev_{\alpha}(T)$. By assumption, there exist $t\in A$, $s\in I(T)$ with $s<t$, and a finite set $F\subseteq \ims(t)$ such that $W_s^F\cap A=\{t\}$. If $K\cap W_{s}^F\subseteq \hat{t}$, then $K\cap W_{s}^F$ has an isolated point, as $\hat{t}$ is scattered. Thus, $K$ has an isolated point as well. Therefore, we can select a point $u\in K\cap W_{s}^F \setminus \hat{t}$; we also take a maximal chain in $K\cap W_{s}^F \cap V_u$ (namely, a maximal chain in $K\cap W_{s}^F$, with minimum $u$). If such a chain is finite, then its maximum is an isolated point in $K\cap W_{s}^F$, so in $K$. Else, if the chain is infinite, then it admits an accumulation point. On the one hand, the accumulation point must belong to the closed set $K\cap W_{s}^F$; on the other one, it belongs to $A$, as it is on a limit level. But $W_s^F\cap A=\{t\}$, hence such an accumulation point must be $t$, which implies $u\leq t$. However, this contradicts $u\notin \hat{t}$ and finishes the proof.
\end{proof}

Similarly as in Proposition \ref{prop: scattered iff}, a characterisation of metrisable compact trees is also available, see, \emph{e.g.}, \cite[Lemma 5.22]{K20} for the more general result that $w(T)= \dens(T)= |I(T)|$ for every compact tree. For the sake of completeness, we give an alternative proof.
\begin{proposition}\label{p: carattmetrizz} For a compact tree $T$ the following are equivalent:
\begin{romanenumerate}
    \item $T$ is metrisable,
    \item $T$ is separable,
    \item $|I(T)|\leq \omega$.
\end{romanenumerate}
\end{proposition}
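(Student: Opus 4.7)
The plan is to show the chain \eqref{i: scattered is actually the earlier label; here I mean the three items of this proposition}:
the implication (i)$\implies$(ii) is the classical fact that compact metrisable spaces are separable; the implication (iii)$\implies$(i) is a general argument about the coarse wedge topology combined with Urysohn's metrisation theorem; and the substantive content sits in (ii)$\implies$(iii), which will use the tree structure in an essential way.

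For (iii)$\implies$(i), recall that by definition the family $\{V_t,\, T\setminus V_t \colon t\in I(T)\}$ is a subbase of the coarse wedge topology. If $|I(T)|\leq\omega$, this subbase is countable, hence so is the basis obtained by taking finite intersections. Thus $T$ is second countable, and being a compact Hausdorff (in particular, regular and $T_1$) space, Urysohn's metrisation theorem gives that $T$ is metrisable.

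The heart of the argument is (ii)$\implies$(iii), and I would proceed as follows. Assume $D\subseteq T$ is a countable dense set. For each $t\in I(T)$, the set $V_t$ is clopen (it lies in the subbase, as does its complement), and non-empty, so $V_t\cap D\neq\emptyset$. For $d\in D$, set $I_d\coloneqq \{t\in I(T)\colon d\in V_t\}=\hat{d}\cap I(T)$. Since $\hat{d}$ is a chain, so is each $I_d$, and by the above observation $I(T)=\bigcup_{d\in D} I_d$. It therefore suffices to prove that every chain contained in $I(T)$ is countable. Suppose, towards a contradiction, that $(t_\alpha)_{\alpha<\omega_1}\subseteq I(T)$ is a strictly increasing chain. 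Then $(V_{t_\alpha})_{\alpha<\omega_1}$ is a strictly decreasing chain of clopen sets, and a direct check shows that the sets
\begin{equation*}
    U_\alpha \coloneqq V_{t_\alpha}\setminus V_{t_{\alpha+1}} \qquad(\alpha<\omega_1)
\end{equation*}
are pairwise disjoint, non-empty, and open. The density of $D$ yields uncountably many distinct points of $D$, contradicting its countability. Hence every $I_d$ is countable and $|I(T)|\leq\omega$.

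The one subtlety I anticipate in the implication (ii)$\implies$(iii) is checking that the $U_\alpha$'s are genuinely non-empty and pairwise disjoint; this rests on the strict inclusion $V_{t_{\alpha+1}}\subsetneq V_{t_\alpha}$, which holds because $t_\alpha\in V_{t_\alpha}\setminus V_{t_{\alpha+1}}$ (indeed $t_\alpha< t_{\alpha+1}$ forces $t_\alpha\notin V_{t_{\alpha+1}}$), together with the observation that $V_{t_\beta}\subseteq V_{t_{\alpha+1}}$ whenever $\beta\geq\alpha+1$. Everything else is bookkeeping, and the overall proof is short once the right clopen family has been singled out.
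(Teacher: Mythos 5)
Your proof is correct, and the implications (i)$\implies$(ii) and (iii)$\implies$(i) coincide with the paper's. The interesting difference is in (ii)$\implies$(iii). The paper first shows that separability forces $\htte(T)<\omega_1$ (via Lemma \ref{lem: omega1 image}: a height exceeding $\omega_1$ would make $[0,\omega_1]$ a continuous image of $T$, contradicting separability, and the countable dense set then caps the height below $\omega_1$), and then applies the pigeonhole principle to find a successor level $\Lev_\beta(T)$ that is uncountable, so that the antichain $\{V_t\}_{t\in\Lev_\beta(T)}$ gives uncountably many pairwise disjoint nonempty open sets. You instead bypass antichains entirely: the identity $I(T)=\bigcup_{d\in D}\bigl(\hat{d}\cap I(T)\bigr)$ reduces everything to chains, and an uncountable well-ordered chain in $I(T)$ produces the disjoint open ``annuli'' $U_\alpha=V_{t_\alpha}\setminus V_{t_{\alpha+1}}$, again contradicting separability. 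Both arguments ultimately exploit that a separable space is c.c.c., but yours is more self-contained (it does not invoke Lemma \ref{lem: omega1 image} nor the structure of levels), whereas the paper's route has the side benefit of making explicit the intermediate fact that a separable compact tree has countable height, which is reused almost verbatim in Corollary \ref{c: MA}. One cosmetic remark: the chain $\hat{d}\cap I(T)$ is well-ordered (as a subset of $\hat{d}$), which is what lets you extract the strictly increasing $\omega_1$-sequence when it is uncountable; you use this implicitly and it is worth a word.
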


\begin{proof} (i)$\implies$(ii) is true for every compact topological space.\\
Next, we prove that (ii)$\implies$(iii). We first notice that if $T$ is separable, then $\htte(T)\leq \omega_1$. Indeed, otherwise Lemma \ref{lem: omega1 image} would imply that $[0,\omega_1]$ is a continuous image of $T$, which is impossible as $[0,\omega_1]$ is not separable. Therefore, if $\{t_n\}_{n\in\omega}$ is a countable dense subset of $T$, we have $\alpha_n \coloneqq \htte(t_n,T)<\omega_1$ for each $n\in\omega$. Thus we get $\sup_{n\in\omega}\alpha_n<\omega_1$ and $\htte(T)<\omega_1$. Now, suppose that $|I(T)|>\omega$. Since $\htte(T)$ is countable, by the pigeonhole principle, there exists a successor ordinal $\beta<\htte(T)$ such that $\Lev_{\beta}(T)$ is uncountable. Hence $\{V_t\}_{t\in\Lev_{\beta}(T)}$ is an uncountable pairwise disjoint family of nonempty open sets, which contradicts the separability of $T$.\\
Finally, we prove that (iii)$\implies$(i). Suppose that $|I(T)|\leq \omega$; then $\{V_t\}_{t\in I(T)}$ is a countable subbase of $T$. Thus $T$ is second countable, hence metrisable.
\end{proof}

After the proof that (ii)$\implies$(iii), it is natural to ask whether it's also possible to add the c.c.c. in the above characterisation. Recall that a topological space has the \emph{countable chain condition} (for short, is \emph{c.c.c.}) if every collection of nonempty, disjoint open subsets is at most countable. For information on Martin's Axiom for $\omega_1$ {\sf MA$_{\omega_1}$} we refer, \emph{e.g.}, to \cite[\S III.3]{Kunen}.

\begin{corollary}[{\sf MA$_{\omega_1}$}]\label{c: MA} A compact tree $T$ is c.c.c.\ if and only if $|I(T)|\leq \omega$.
\end{corollary}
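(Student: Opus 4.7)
The implication $(\Leftarrow)$ is immediate from Proposition~\ref{p: carattmetrizz}: $|I(T)|\leq\omega$ yields that $T$ is metrisable, hence separable, hence c.c.c. For the converse, the plan is to assume that $T$ is c.c.c.\ while $|I(T)|\geq\omega_1$ and to exhibit a Suslin tree, which contradicts the well-known consequence of {\sf MA$_{\omega_1}$} that no Suslin tree exists (\cite[\S III.3]{Kunen}).

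Two claims drive the argument, both exploiting the fact (inherent in the coarse wedge topology) that $V_t$ is clopen exactly when $t\in I(T)$. First, I would prove that every antichain $A\subseteq I(T)$ is countable: for incomparable $t,t'\in A$ one has $V_t\cap V_{t'}=\emptyset$ (two incomparable elements of a tree have disjoint upper sets), so $\{V_t\}_{t\in A}$ is a pairwise disjoint family of nonempty clopens, and $|A|\leq\omega$ by the c.c.c. Second, I would prove that every chain in $I(T)$ is countable: given a strictly increasing $\omega_1$-chain $(t_\alpha)_{\alpha<\omega_1}\subseteq I(T)$, the telescoped differences $U_\alpha\coloneqq V_{t_\alpha}\setminus V_{t_{\alpha+1}}$ are clopen (differences of clopens), nonempty (each contains $t_\alpha$), and pairwise disjoint (for $\alpha<\beta$ one has $U_\beta\subseteq V_{t_\beta}\subseteq V_{t_{\alpha+1}}$, which is disjoint from $U_\alpha$), again contradicting the c.c.c.

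Finally, $I(T)$ with the induced order is itself a tree: the predecessor set $\{s\in I(T)\colon s<t\}$ is well-ordered as a subset of the well-ordered $T$-predecessors of $t$. Its levels are antichains of $T$ contained in $I(T)$, so by the first claim each of them is countable; by the second claim no element has $\omega_1$ predecessors in $I(T)$, so $\htte(I(T))\leq\omega_1$. Combined with the hypothesis $|I(T)|\geq\omega_1$, this forces $\htte(I(T))=\omega_1$ and $|I(T)|=\omega_1$, so that $I(T)$ is an $\omega_1$-tree with no uncountable chain (second claim) and no uncountable antichain (first claim), i.e., a Suslin tree---the desired contradiction.

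The heart of the argument is the telescoped construction $V_{t_\alpha}\setminus V_{t_{\alpha+1}}$, which transforms an uncountable chain of $I(T)$ into an uncountable disjoint family of opens; everything else (the first claim and the Suslin-tree conclusion) is routine, so this step is the only real obstacle.
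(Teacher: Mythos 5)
Your proof is correct, but it takes a genuinely different route from the paper's. The paper argues globally: a c.c.c.\ compact tree cannot map continuously onto $[0,\omega_1]$ (the c.c.c.\ passes to continuous images), so $\htte(T)\leq\omega_1$ by Lemma~\ref{lem: omega1 image}; hence $T$ is Corson by \cite[Theorem 2.8]{N2}, and one invokes the known consequence of {\sf MA$_{\omega_1}$} that c.c.c.\ Corson compacta are separable, finishing with Proposition~\ref{p: carattmetrizz} (separable $\implies |I(T)|\leq\omega$). You instead work combinatorially inside $I(T)$: the disjoint family $\{V_t\}_{t\in A}$ for an antichain $A$, and the telescoped differences $V_{t_\alpha}\setminus V_{t_{\alpha+1}}$ for an $\omega_1$-chain, are both legitimate (the $V_t$ are clopen precisely for $t\in I(T)$, and chains in a tree are well-ordered, so an uncountable chain does contain a strictly increasing $\omega_1$-sequence), and they show that $I(T)$ has only countable chains and antichains; your level count then turns the hypothesis $|I(T)|\geq\omega_1$ into a Suslin tree, contradicting {\sf MA$_{\omega_1}$}. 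Your route is more self-contained --- it bypasses the Corson machinery and the nontrivial separability result for c.c.c.\ Corson compacta --- and it in fact proves a formally stronger statement: the corollary already follows from Suslin's Hypothesis, which is strictly weaker than {\sf MA$_{\omega_1}$}. It also dovetails with the Remark following the corollary, where the completion of a Suslin tree is exhibited as the counterexample when Suslin's Hypothesis fails: your argument shows a Suslin tree is the \emph{only} possible obstruction. What the paper's proof buys in exchange is brevity (it reuses lemmas already established) and the additional conclusion that $T$ is separable, obtained directly rather than via Proposition~\ref{p: carattmetrizz}.
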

\begin{proof} Every separable topological space is c.c.c.; thus we only need the converse implication. As above, we first notice that a c.c.c.\ compact tree $T$ satisfies $\htte(T)\leq\omega_1$. In fact, otherwise Lemma \ref{lem: omega1 image} would imply that $[0,\omega_1]$ is a continuous image of a c.c.c.\ space, which is a contradiction with the facts that the c.c.c.\ is preserved by continuous images and that $[0,\omega_1]$ is not c.c.c.. Consequently, every $t\in T$ satisfies $\htte(t,T)<\omega_1$ and it is easy to infer that $T$ is Corson, \cite[Theorem 2.8]{N2}. Finally, it is well known that {\sf MA$_{\omega_1}$} implies that c.c.c.\ Corson compacta are separable (see, \emph{e.g.}, \cite[p.~205]{CN chain cond}, or \cite[p.~207]{Fremlin}), and we are done.
\end{proof}

\begin{remark} If a Souslin tree exists, Corollary \ref{c: MA} is false (recall that the existence of a Souslin tree is consistent with ZFC, see \cite[\S III.5 and III.7]{Kunen}). Indeed, P. Nyikos showed that the completion of a uniformly $\omega$-ary Souslin tree is a non-separable,  hereditarily Lindel\"{o}f, Corson compact space in the coarse wedge topology (see \cite[Theorem 3.2]{N2}). Therefore, in particular, it is a non-separable c.c.c.\ Corson compact tree.
\end{remark}

We conclude this section with an explicit construction of a $1$-norming M-basis in $C(T)$, where $T$ is a metrisable compact tree. Notice that, when $T$ is metrisable, $C(T)$ is separable, so the existence of a $1$-norming M-basis is a classical result. Our point here is only to give an explicit (simple) formula. The construction is inspired by \cite[Example 5.37]{K20}, in which a shrinking M-basis is constructed in $C([0,\alpha])$ for every countable ordinal $\alpha$.

\begin{example}\label{ex: explicit norming M-basis}
Let $T$ be a metrisable compact tree. By Proposition \ref{p: carattmetrizz} the set $I(T)$ is countable. Therefore we can fix an injective enumeration $(\xi_n)_{n\in \omega}$ of $I(T)$ with the property that $\xi_0=0_T$. In order to define a $1$-norming M-basis $\{f_n,\mu_n\}_{n\in\omega}$ in $C(T)$ we make use of an induction argument. We first define, for each $n\in \omega$ the following sets
\begin{equation*}
F_{n+1}\coloneqq V_{\xi_{n+1}}\cap \{\xi_0,\dots,\xi_n\}
\end{equation*}
and
\begin{equation*}
G_{n+1}\coloneqq \hat{\xi}_{n+1}\cap \{\xi_0,\dots,\xi_n\}.
\end{equation*}
Now, as initial step of the induction argument we define
\begin{equation*}
    f_0\coloneqq \bone_{V_{\xi_0}} \,\,\,\mbox{ and }\,\,\, \mu_0\coloneqq \delta_{\xi_0}.
\end{equation*}
Suppose that $f_k \in C(T)$ and $\mu_k \in C(T)^*$ have been defined for every $k\leq n$, in such a way that $\langle \mu_{k},f_j\rangle =\delta_{j,k}$ for every $k,j\leq n$. We are going to define $f_{n+1}$ and $\mu_{n+1}$. 
Let
\begin{equation*}
    f_{n+1}=\bone_{V_{\xi_{n+1}}\setminus \bigcup_{t\in F_{n+1}}V_t} \,\,\,\mbox{ and } \,\,\, \mu_{n+1}=\delta_{\xi_{n+1}} - \delta_{\max G_{n+1}}.
\end{equation*}
Let us show that $\langle\mu_j, f_k\rangle =\delta_{j, k}$ for every $j,k\leq n+1$. By the induction hypothesis we may suppose that at least one among $j$ and $k$ is equal to $n+1$. If $k=j=n+1$, then clearly we get $\langle \mu_j, f_k\rangle =1$. We now suppose that $j=n+1>k$ and we distinguish two cases:
\begin{itemize}
    \item Assume that $\xi_k<\xi_{n+1}$. If there exists $k_1< k$ such that $\xi_k< \xi_{k_1}< \xi_{n+1}$, then $f_k(\xi_{n+1})=0$ as well as $f_k(\max G_{n+1})=0$. Otherwise, we have $f_k(\xi_{n+1})= 1= f_k(\max G_{n+1})$. In either case we get $\langle\mu_j, f_k\rangle=0$.
    \item If  $\xi_k \not< \xi_{n+1}$, then clearly $f_k(\xi_{n+1})= f_k(\max G_{n+1})=0$.
\end{itemize}
In the case when $k=n+1>j$, we argue similarly. Hence, we have $\langle\mu_j, f_k\rangle=0$ for each $j,k\leq n+1$.

Consequently, the family $\{f_n,\mu_n\}_{n\in\omega}$ is a biorthogonal system of $C(T)$. Moreover, it is immediate to realise by induction that $\{\bone_{V_t}\} _{t\in I(T)}\subseteq \Span\{f_n\}$ and $\{\delta_t\} _{t\in I(T)}\subseteq \Span\{\mu_n\}$; in particular $\Span\{\mu_n\}$ is $1$-norming. Finally, the equality $\bone_{T \setminus V_t}= \bone_{V_{\xi_0}} - \bone_{V_t}$ shows that also $\{\bone_{T \setminus V_t}\} _{t\in I(T)}\subseteq \Span\{f_n\}$. It readily follows that $\Span\{f_n\}$ is an algebra, which is therefore dense, due to the Stone-Weierstrass theorem. Thus, $\{f_n,\mu_n\}_{n\in\omega}$ is a 1-norming M-basis of $C(T)$, as desired.
\end{example}

\begin{remark} One can wonder if, in analogy with \cite[Example 5.37]{K20}, the M-basis in Example \ref{ex: explicit norming M-basis} is additionally shrinking. Let us notice that it is in general not the case. Indeed, if $T$ is not scattered, $C(T)$ is not Asplund, hence it admits no shrinking M-basis at all. On the other hand, if $T$ is scattered, then it is a standard fact that $T$ is countable \cite[Lemma VI.8.2]{DGZ}. Therefore, by the Mazurkiewicz--Sierpi\'nski theorem, $T$ is homeomorphic to a countable ordinal, and \cite[Example 5.37]{K20} applies. Finally, let us recall the following problem, that already appeared in the Introduction.
\end{remark}

\begin{problem}\label{pb: norming in tree} Let $T$ be a tree with $\htte(T)\leq\omega_1$. Must $C(T)$ have a norming M-basis?
\end{problem}


\end{document}